\documentclass[12pt]{article}
\usepackage{amsmath, amssymb, color, amsthm}
\usepackage{graphicx} 
\usepackage{float}
\usepackage{wrapfig}
\usepackage{comment}

\newtheorem{theorem}{Theorem}
\newtheorem{definition}[theorem]{Definition}
\newtheorem{lemma}[theorem]{Lemma}
\newtheorem{corollary}[theorem]{Corollary}
\newcommand{\p}{\mathcal{P}}

\newcommand\blfootnote[1]{%
  \begingroup
  \renewcommand\thefootnote{}\footnote{#1}%
  \addtocounter{footnote}{-1}%
  \endgroup
}

\makeatletter
\def\blfootnote{\xdef\@thefnmark{}\@footnotetext}
\makeatother


\begin{document}
\title{Congruences modulo primes of the Romik sequence related to the Taylor expansion of the Jacobi theta constant $\theta_3$ {{\bf }}
}
\author{Robert Scherer\footnote{Department of Mathematics, University of California, Davis, One Shields Ave., Davis, CA 95616, USA. Email: rscherer@math.ucdavis.edu
}}

\maketitle

\begin{abstract}
Recently, Romik determined in \cite{Romik} the Taylor expansion of the Jacobi theta constant $\theta_3$, around the point $x=1$.   He discovered a new integer sequence, $(d(n))_{n=0}^\infty=1,1,-1,51,849,-26199,\dots$, from which the Taylor coefficients are built, and conjectured that the numbers $d(n)$ satisfy certain congruences. In this paper, we prove some of these conjectures, for example that $d(n)\equiv (-1)^{n+1}$ (mod 5) for all $n\geq 1$, and that for any prime $p\equiv 3$ (mod 4), $d(n)$ vanishes modulo $p$ for all large enough $n$.
\end{abstract}
\blfootnote{\thanks{\emph {Key Words:} theta function, Jacobi theta constant, modular form}}
\blfootnote{\thanks{\emph{2010 Mathematics Subject Classification}: 11B83, 11F37, 14K25}}


\section{Introduction}
\subsection{The sequence $(d(n))_{n={0}}^\infty$ and the main result}
In this paper we will prove a list of congruences, modulo certain prime numbers, satisfied by the integer-valued Romik sequence, which is defined below and whose first several terms are given by \[(d(n))_{n=0}^\infty=1,1,-1,51,849,-26199, 1341999, 82018251, 18703396449,\dots\] (see also \cite{OEIS}). Specifically, we will show: 
\begin{theorem}
\label{main_theorem}
\begin{itemize}
\item[(i)]
$d(n)\equiv 1$ (mod 2) for all $n\geq0$,
\item[(ii)]
$d(n)\equiv (-1)^{n+1}$ (mod 5) for all $n\geq 1$, \, and
\item[(iii)]
if $p$ is prime and $p\equiv 3$ (mod 4), then $d(n)\equiv 0$ (mod p) for all $n>\frac{p^2-1}{2}.$
\end{itemize}
\end{theorem}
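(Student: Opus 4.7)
My plan is to reduce all three parts of the theorem to the analysis of a single polynomial-coefficient recurrence for $d(n)$. The preparatory step is to derive, from the modular/differential properties of $\theta_3$, a recurrence of the form
\[
A(n)\,d(n+1) \;=\; \sum_{j=0}^{k} B_j(n)\,d(n-j)
\]
for explicit polynomials $A, B_0, \ldots, B_k \in \mathbb{Z}[n]$ of low degree. Such a recurrence should be extractable from Romik's construction, where $d(n)$ arises (after clearing denominators) from the Taylor coefficients of $\theta_3$ at $x=1$, using the fact that $\theta_3$ satisfies a low-order nonlinear ODE equivalent to its being a weight-$\tfrac12$ modular form. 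Having an honest, closed-form recurrence in hand is the essential platform on which the rest of the argument rests.

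Parts (i) and (ii) then reduce to straightforward inductions. One reduces the recurrence modulo $2$ (respectively modulo $5$), checks enough initial terms to seed the induction — for (ii) one notes that $d(1)=1,\ d(2)=-1,\ d(3)\equiv 1,\ d(4)\equiv -1 \pmod 5$ already fit the alternating pattern — and propagates. The only subtlety is handling residue classes of $n$ for which the leading coefficient $A(n)$ vanishes modulo the prime; in those cases one either shifts the recurrence by one or, better, packages a few consecutive terms and inducts on a block.

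The main work is in part (iii). The specific bound $n > (p^2-1)/2$ is a strong clue: it is the classical cutoff one meets when reducing $\theta$-series modulo $p$, and in particular $(p^2-1)/2$ equals the order to which the weight-$\tfrac12$ form $\theta_3$ is determined mod $p$ by its initial segment. I would approach (iii) by reducing the recurrence modulo $p$ and arguing that for $p \equiv 3 \pmod 4$, a specific combination of the $B_j(n)$ vanishes mod $p$ at every $n > (p^2-1)/2$, while $A(n)$ remains a unit mod $p$ past that threshold. The decisive arithmetic input from the hypothesis $p\equiv 3\pmod 4$ is that $-1$ is a quadratic non-residue mod $p$, which collapses the quadratic-character sums $\sum_k \chi_{-4}(k)(\cdots)$ naturally present in the mod-$p$ expression of the Taylor coefficients of $\theta_3$. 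Once $(p^2-1)/2+1$ consecutive values of $d(n)$ are shown to be divisible by $p$, the reduced recurrence propagates the vanishing to every subsequent term.

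The principal obstacle I expect is pinning down \emph{exactly why} the cutoff is $(p^2-1)/2$ and not some smaller or larger quantity. A naive reduction of the recurrence will not by itself produce this sharp index: one likely needs either a closed-form expression for $d(n)$ modulo $p$ as a hypergeometric/character sum (so that Lucas- or Wilson-type identities produce the threshold), or a finer identity reflecting how $\theta_3 \pmod p$ is controlled by its truncation up to degree $(p^2-1)/2$. Identifying and exploiting this sharper structural input is the crux of the argument, and is where the hypothesis $p \equiv 3 \pmod 4$ — as opposed to $p \equiv 1 \pmod 4$, where no such vanishing can hold because $-1$ is a square — enters decisively.
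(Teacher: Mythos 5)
Your plan rests on a foundation that is not available: a finite-order recurrence $A(n)\,d(n+1)=\sum_{j=0}^{k}B_j(n)\,d(n-j)$ with polynomial coefficients. No such holonomic recurrence for $d(n)$ is known, and nothing in Romik's construction produces one. What Romik actually provides is a \emph{full-history} recurrence, $d(n)=v(n)-\sum_{k=1}^{n-1}r(n,k)\,d(k)$, whose coefficients $r(n,k)$ are themselves defined through the $2k$-th power of a generating function built from another recursively defined sequence $u(n)$. Every step of your outline ("reduce the recurrence mod $p$ and induct") presupposes the object you have not constructed, so the argument never gets off the ground. Moreover, even with the correct recurrence in hand, your assessment of parts (i) and (ii) as "straightforward inductions" misses the real content. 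Part (i) requires knowing that $r(n,k)$ is always even, which in the paper follows from a nontrivial integrality statement ($s(n,k)=r(n,k)/2^{n-k}\in\mathbb{Z}$, proved via a partition-sum formula). Part (ii) requires the identity $\sum_{k=1}^{n-1}(-1)^k r(n,k)\equiv(-1)^{n+1}\pmod 5$, which the paper obtains by first deriving a closed form for $r(n,k)$ mod $5$ (via a multinomial-coefficient sum over partitions into odd parts, reduced using Legendre's formula for $\omega_5(n!)$) and then recognizing the resulting sums as cardinalities of the solution sets of $x^5=1$ in symmetric groups, to which Frobenius's divisibility theorem applies. None of this is visible from, or replaceable by, checking a few initial terms.

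For part (iii) you correctly identify the cutoff $n_0=(p^2-1)/2$ as the crux, but the mechanism you propose (quadratic character sums collapsing because $-1$ is a non-residue) is not how the hypothesis $p\equiv 3\pmod 4$ enters. The actual role of that hypothesis is elementary divisibility: $p$ lies in the arithmetic progression $3,7,11,\dots$, so the factor $(3\cdot 7\cdots(4n-1))^2$ in the recurrence for $u(n)$ is divisible by $p$ once $4n-1\geq p$, and $3p\equiv 1\pmod 4$ makes $1\cdot 5\cdots(4n-3)$ divisible by $p$ once $4n-3\geq 3p$. This forces $u(n)\equiv 0$ for $n\geq n_0$ and $v(n)\equiv 0$ for $n>n_0$. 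The sharp threshold $(p^2-1)/2$ then emerges from a $p$-adic valuation analysis: one shows that for $n>n_0$ every multinomial coefficient $\frac{(2n)!}{\prod_i i!^{c_i}c_i!}$ indexed by a partition of $2n$ into $2k\leq 2n_0$ odd parts, each part less than $p^2$ and none equal to $p$, has positive $p$-adic valuation --- essentially because a unit valuation would force $2n\leq p^2-1$ via the base-$p$ digit sums in Legendre's formula. Your proposal acknowledges that "a naive reduction will not produce this sharp index" but does not supply the missing input, so part (iii) remains unproved in your outline as well.
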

This proves half of Conjecture 13 (b) in \cite{Romik}, where the sequence $(d(n))$ was first introduced. (The half of the statement that we don't prove is that for primes $p=4k+1$, the sequence $(d(n))_{n={0}}^\infty$ mod $p$ is periodic, although Theorem \ref{main_theorem} is a specific example of this phenomenon in the case $p=5$.)


The sequence $(d(n))$ is defined in terms of the Jacobi theta constant $\theta_3$, which is the holomorphic function defined on the right half-plane by
\begin{equation}\label{eq:theta1}
\theta_3(x)=1+2\sum_{n=1}^\infty e^{-\pi n^2x}\quad \quad (\text{Re}(x)>0).
\end{equation}
$\theta_3$ satisfies the modular transformation identity
\begin{equation}\label{eqn:transform}
\theta_3\left(\frac{1}{x}\right)=\sqrt{x}\,\theta_3(x),
\end{equation}
which implies that the function $\vartheta$ defined on the upper half-plane by $\vartheta(\tau)=\theta_3(-i\tau)$
is a weight $\frac{1}{2}$ modular form with respect to a certain subgroup of SL$_2(\mathbb{Z})$ (see  \cite[Ch.~4]{Z} and \cite[p. 100]{B}). The numbers $d(n)$ arise in the following way.

\begin{definition}[Romik~\cite{Romik}]\label{def}
Define the function $\sigma$ on the unit disk by 
\begin{equation}\label{eq:sigma1}
\sigma(z)=\frac{1}{\sqrt{1+z}}\theta_3\left(\frac{1-z}{1+z} \right),
\end{equation}
and define the sequence $(d(n))_{n=0}^\infty$ by 
\[
d(n)=\frac{\sigma^{(2n)}(0)}{A\Phi^n },
\]
where $\Phi=\frac{\Gamma\left(\frac{1}{4}\right)^8}{128\pi^4}$, and $A=\theta_3(1)=\frac{\Gamma\left(\frac{1}{4}\right)}{\sqrt{2}\pi^{3/4}}$.
\end{definition}
Thus, the numbers $(d(n))_{n=0}^\infty$ are the Taylor coefficients, modulo trivial factors, of $\sigma$ at $0$. 
It's not at all clear from the definition that the numbers $d(n)$ are integers, but this is shown to be true in $\cite{Romik}$. 
Furthermore, the connection of the sequence $(d(n))$ to the derivatives of $\theta_3$ at $1$ can be made explicit:
\begin{theorem}[Romik~\cite{Romik}]\label{thm:sigma1}
For all $n\geq 0$, 
\begin{equation}\label{eq:theta2}
\theta_3^{(n)}(1)=A\cdot \frac{(-1)^n}{4^n}\sum_{k=0}^{\lfloor{n/2}\rfloor}\frac{(2n)!(4\Phi)^k}{2^{n-2k}(4k)!(n-2k)!} d(k).
\end{equation}
\end{theorem}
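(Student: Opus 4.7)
The plan is to invert the change of variables in Definition~\ref{def} so that $\theta_3$ is expressed in terms of $\sigma$, and then to Taylor-expand both factors around $x=1$, reading off $\theta_3^{(n)}(1)$ from the result. Solving $x=(1-z)/(1+z)$ for $z$ gives $z=(1-x)/(1+x)$ and $1+z=2/(1+x)$, so
\[
\theta_3(x) \;=\; \frac{\sqrt{2}}{\sqrt{1+x}}\,\sigma\!\left(\frac{1-x}{1+x}\right).
\]
Writing $x=1+y$ with $y$ near $0$, this becomes $\theta_3(1+y) = (1+y/2)^{-1/2}\,\sigma(-y/(2+y))$.

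A key preliminary is that $\sigma$ is an even function of $z$. I would verify this by replacing $z\mapsto -z$ in the definition and noting that $(1+z)/(1-z)$ is the reciprocal of $(1-z)/(1+z)$; applying the modular transformation \eqref{eqn:transform} then cancels the factors of $\sqrt{1-z}$, giving $\sigma(-z)=\sigma(z)$. Hence by Definition~\ref{def}, $\sigma(z)=\sum_{k\ge 0} \frac{A\Phi^k d(k)}{(2k)!}\,z^{2k}$, and the minus sign inside $\sigma$ above is irrelevant, yielding
\[
\theta_3(1+y)\;=\;\sum_{k\ge 0}\frac{A\,\Phi^k d(k)}{(2k)!\,4^k}\cdot\frac{y^{2k}}{(1+y/2)^{2k+1/2}}.
\]

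It remains to extract $\theta_3^{(n)}(1)=n!\,[y^n]\theta_3(1+y)$, which reduces to expanding each factor $(1+y/2)^{-(2k+1/2)}$ by the generalized binomial theorem and reading off the coefficient of $y^{n-2k}$ for $k=0,1,\dots,\lfloor n/2\rfloor$. The one genuine obstacle is bookkeeping: one must rewrite the resulting half-integer binomial coefficient $\binom{-(2k+1/2)}{n-2k}$ as a ratio of integer factorials matching the target formula. I would convert the rising product $\prod_{j=0}^{n-2k-1}(2k+\tfrac12+j)$ to the product of odd integers $(4k+1)(4k+3)\cdots(2n-1)$, apply the identity $(2m-1)!!=(2m)!/(2^m m!)$ twice to express this as $(2n)!(2k)!/((4k)!\,2^{n-2k}n!)$, and then track the accumulated powers of $2$ carefully; these collapse to produce the overall factor $(-1)^n/4^n$ and the term $(4\Phi)^k/(2^{n-2k}(4k)!(n-2k)!)$ appearing in \eqref{eq:theta2}, completing the proof.
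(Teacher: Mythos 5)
Your argument is correct: the theorem is stated here as an imported result from \cite{Romik} and this paper gives no proof of it, but your route --- inverting Definition~\ref{def} to get $\theta_3(x)=\sqrt{2/(1+x)}\,\sigma\bigl(\tfrac{1-x}{1+x}\bigr)$, proving $\sigma(-z)=\sigma(z)$ from the transformation law \eqref{eqn:transform} so that only the coefficients $\sigma^{(2k)}(0)=A\Phi^k d(k)$ enter, and then extracting $[y^{n}]$ via the binomial expansion of $(1+y/2)^{-(2k+1/2)}$ --- is essentially the derivation in the cited source. I checked the power-of-two bookkeeping you deferred (the coefficient works out to $A(-1)^n\,(2n)!\,2^{4k}\Phi^k/\bigl(2^{3n}(4k)!(n-2k)!\bigr)$, which equals the summand in \eqref{eq:theta2}), and the evenness lemma is genuinely needed since Definition~\ref{def} constrains only the even-order derivatives of $\sigma$, so you were right to isolate it.
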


\subsection{Taylor coefficients of modular forms}

The results in this paper, which describe congruence properties of a specific integer sequence, may be viewed in the broader context of the study of arithmetic properties of Taylor coefficients of half-integer weight modular forms around complex multiplication points. Given a modular form $f$ of weight $k$, defined on the upper-half plane $\mathbb{H}$, and a CM point $z\in\mathbb{H}$; it is convenient (see e.g. Ch. 5.1 in \cite{Z}) to define the Taylor expansion of $f$ in terms of a new variable $w$ in the unit disk as
\[
(1-w)^{-k}f(M(w))=\sum_{n=0}^\infty c(n) \frac{w^n}{n!} \quad (|w|<1),
\]
where $M$ is the M\"obius transformation given by $M(w)=\frac{z-\bar{z}w}{1-w}$ that maps the unit disk to $\mathbb{H}$, with $M(0)=z$. 
In the case of the modular form $\vartheta(\tau)=\theta_3(-i\tau)$, the Taylor coefficients in the above expansion, about the CM point $z=i$, are the sequence $(d(n))$ (after a proper normalization) introduced in \cite{Romik} and studied here.

The congruences for $(d(n))$ that we consider are analogous to known congruences in the integer weight case. For example it was shown in $\cite{L}$ that if a prime $p$ is inert in the CM-field generated by $z$ and $m\geq1$ is an integer, then the Taylor coefficients eventually vanish modulo $p^m$. However, similar results for weight $\frac{1}{2}$ do not appear to have been established. In addition, the periodicity established here for $p=5$ has analogues for integer weight modular forms, which are known in general cases to have periodic coefficients modulo $p^m$ when $p$ is a split prime (see e.g. \cite{Dats}). We hope that the results proven here for $\theta_3$ will eventually give way to more general statements about the periodicity of congruences of Taylor coefficients of half-integer weight modular forms.

\subsection{An auxiliary matrix and a recurrence for $d(n)$}
In this subsection we recall from \cite{Romik} a recurrence relation for $(d(n))$ in terms of a certain infinite matrix.
\begin{definition}\label{rec}
Define the sequences $(u(n))_{n=0}^\infty$ and $(v(n))_{n=0}^\infty$ by $u(0)=v(0)=1$ and the following recurrence relations for $n\geq 1$:
\begin{equation}\label{eq:u_recursion}
u(n)=\left(3\cdot 7\cdots (4n-1)\right)^2 - \sum_{m=0}^{n-1} {2n+1\choose 2m+1}\left(1\cdot 5\cdots (4(n-m)-3)\right)^2u(m)
\end{equation}
\begin{equation}\label{eq:v_recursion}
\hspace{-2.415cm}v(n)=2^{n-1}\left(1\cdot 5\cdots (4n-3)\right)^2 -\frac{1}{2} \sum_{m=1}^{n-1} {2n\choose 2m}v(m)v(n-m).
\end{equation}
\end{definition}

\begin{definition}
Define the array $(s(n,k))_{1\leq k\leq n}$, by
\begin{equation}\label{eq:r}
s(n,k)=\frac{(2n)!}{(2k)!}[z^{2n}]\left(\sum_{j=0}^\infty \frac{u(j)}{(2j+1)!} z^{2j+1}\right)^{2k},
\end{equation}
where $[z^n]f(z)=[z^n]\sum_{n=0}^\infty c_nz^n$ denotes the $n^{th}$ coefficient $c_n$ in a power series expansion for $f$. 
Also, for $1\leq k\leq n$, define $r(n,k):=2^{n-k}s(n,k).$
\end{definition}
The quantities $r(n,k)$ were introduced in $\cite{Romik}$, along with the following recurrence relation for $d(n)$ (which is used in \cite{Romik} to prove that $d(n)\in\mathbb{Z}$).
\begin{theorem}[Romik~\cite{Romik}]\label{thm:r}
For all pairs $(n,k)$, $1\leq k\leq n$, both $r(n,k)$ and $s(n,k)$ are integers. 
Furthermore, with $d(0)=1$, the following recurrence relation holds for all $n\geq 1$: 
\begin{equation}\label{eq:recurrence_d}
d(n)=v(n)-\sum_{k=1}^{n-1} r(n,k) d(k).
\end{equation}
\end{theorem}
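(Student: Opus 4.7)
My plan is to derive the recurrence \eqref{eq:recurrence_d} by expanding $\sigma(z)$ using the modular transformation and matching even-power Taylor coefficients. From $\sigma(z) = (1+z)^{-1/2}\theta_3((1-z)/(1+z))$ together with \eqref{eqn:transform} applied at $x=(1-z)/(1+z)$, one first verifies $\sigma(-z)=\sigma(z)$, so only the $\sigma^{(2n)}(0)$ are nonzero. Introducing the generating functions $V(z):=\sum_{n\ge 0}v(n)z^{2n}/(2n)!$ and $U(z):=\sum_{j\ge 0}u(j)z^{2j+1}/(2j+1)!$, the quadratic recurrence \eqref{eq:v_recursion} is directly equivalent (by extracting $(2n)![z^{2n}]$ from $V(z)^2$ and using $v(0)=1$) to the closed form
\[
V(z)^2 \;=\; 1+\sum_{n\ge 1}2^n\bigl(1\cdot 5\cdots(4n-3)\bigr)^2\frac{z^{2n}}{(2n)!},
\]
while \eqref{eq:u_recursion} similarly pins down the odd series $U(z)$. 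The core analytic task is to identify $V(z)$ and $U(z)$ with the Taylor data obtained by pulling $\theta_3$ back through the M\"obius map $w=(1-z)/(1+z)$: the double-factorial products $(1\cdot 5\cdots(4n-3))^2$ and $(3\cdot 7\cdots(4n-1))^2$ arise naturally as iterated derivatives of $((1-z)/(1+z))^{\pm 1/2}$ at $z=0$, reflecting the weight-$1/2$ nature of $\theta_3$.

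Once these identifications are in place, two expressions for $\sigma(z)$ can be written down: one as $A\sum_n d(n)\Phi^n z^{2n}/(2n)!$ from Definition~\ref{def}, the other as a combination involving $V(z)$ and the compositional powers $U(z)^{2k}$ via the chain rule applied to $\theta_3((1-z)/(1+z))$ expanded at $x=1$. Matching the $z^{2n}$-coefficients in these two expressions yields the linear relation
\[
d(n) \;=\; v(n)\;-\;\sum_{k=1}^{n-1} r(n,k)\,d(k),
\]
with $r(n,k)=2^{n-k}s(n,k)$ emerging precisely as the coefficient $2^{n-k}\cdot(2n)!/(2k)!\cdot[z^{2n}]U(z)^{2k}$, in agreement with \eqref{eq:r}.

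For integrality, $u(n)\in\mathbb{Z}$ is immediate by induction on \eqref{eq:u_recursion}, and $v(n)\in\mathbb{Z}$ follows by induction on \eqref{eq:v_recursion} once one checks that $\sum_{m=1}^{n-1}\binom{2n}{2m}v(m)v(n-m)$ is even (using the symmetry $m\leftrightarrow n-m$ and the fact that $\binom{2n}{n}$ is even for $n\ge 1$). Then
\[
s(n,k)=(2n)!\,[z^{2n}]\!\left(\frac{U(z)^{2k}}{(2k)!}\right)
\]
is an integer by the classical exponential formula: it is the $u$-weighted count of set partitions of $[2n]$ into $2k$ unordered blocks of odd sizes, each block of size $2j+1$ contributing a factor $u(j)\in\mathbb{Z}$. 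Hence $r(n,k)=2^{n-k}s(n,k)\in\mathbb{Z}$ as well.

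The main obstacle will be the identification step: showing that the hand-written recurrences \eqref{eq:u_recursion} and \eqref{eq:v_recursion} really do encode the analytic Taylor expansions I propose. This requires careful bookkeeping of powers of $2$, binomial coefficients, and the two families of double-factorial products, and verifying term-by-term that iterating the modular identity \eqref{eqn:transform} at $x=1$ produces exactly these recurrences. Once that identification is secured, extracting $[z^{2n}]$ from the two expansions of $\sigma(z)$ yields \eqref{eq:recurrence_d}.
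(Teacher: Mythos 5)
First, a point of orientation: the paper does not prove this theorem at all --- it is quoted from Romik \cite{Romik}, so there is no internal proof to compare your attempt against. The one fragment the paper does establish independently is the integrality of $s(n,k)$, via Theorem \ref{thm:partitions_def} and Lemma \ref{N}, and your argument for that fragment (interpreting $(2n)!\,[z^{2n}]\bigl(U(z)^{2k}/(2k)!\bigr)$ as a $u$-weighted count of set partitions of a $2n$-set into $2k$ odd-sized blocks) is exactly the paper's argument. Your inductive integrality proofs for $u(n)$ and $v(n)$ are also correct; in particular the parity check on $\sum_{m=1}^{n-1}\binom{2n}{2m}v(m)v(n-m)$ via the symmetry $m\leftrightarrow n-m$ together with the evenness of $\binom{2n}{n}$ for $n\geq 1$ is sound.

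The recurrence \eqref{eq:recurrence_d} itself, however, is not proved by your proposal. Your preliminary reductions are fine: $\sigma$ is even by \eqref{eqn:transform}, the $v$-recurrence is equivalent to the stated closed form for $V(z)^2$, and (since $r(n,n)=1$) the recurrence \eqref{eq:recurrence_d} is equivalent to a composition identity expressing $V$ as the exponential generating function of $(d(k))$ composed with a rescaling of $U$. But the entire content of the theorem lies in identifying $U$ and $V$, as defined by the purely formal recurrences \eqref{eq:u_recursion}--\eqref{eq:v_recursion}, with analytic data attached to $\theta_3$ --- and that is precisely the step you defer as ``the main obstacle.'' Worse, the one concrete mechanism you propose for it cannot work: iterated derivatives of $\left(\frac{1-z}{1+z}\right)^{\pm 1/2}$ produce the odd double factorials $1\cdot 3\cdots(2n-1)$ coming from the Pochhammer symbols $(\pm\tfrac12)_n$, whereas the products $1\cdot 5\cdots(4n-3)$ and $3\cdot 7\cdots(4n-1)$ are $4^n(\tfrac14)_n$ and $4^n(\tfrac34)_n$; their squares are coefficients of hypergeometric series of type ${}_2F_1(\tfrac14,\tfrac14;1;\cdot)$, which enter through the elliptic-integral representation of $\theta_3^2$ and the inversion of the modulus map, not through the automorphy factor $\sqrt{x}$. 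Finally, your ``second expression for $\sigma(z)$,'' obtained by expanding $\theta_3$ about $x=1$ and composing with the M\"obius map, is circular as stated, since by Theorem \ref{thm:sigma1} those Taylor coefficients are themselves built from the $d(k)$; independent analytic input about $\theta_3$ is required to close the loop. As written, the proposal establishes the integrality claims but not the recurrence.
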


The proof of Theorem \ref{main_theorem} will be based on \eqref{eq:recurrence_d} and a new formula for $s(n,k)$, given in Theorem \ref{thm:partitions_def} below. This formula will provide, among other things, an argument different from the one in \cite{Romik} that $s(n,k)\in\mathbb{Z}$. 

\subsection{Structure of the paper}
In the next section we will derive a formula for $s(n,k)$ mod $p$ that will be an important tool in the rest of the paper. 
In Section \ref{section:odd}, we prove Theorem \ref{main_theorem}, part (i). In Sections \ref{section:p=5} and \ref{section:3mod4} we will give proofs of parts (ii) and (iii), respectively, based on the expression for $s(n,k)$ mod $p$ derived in Section \ref{section:formula},
 the recursive definition \eqref{eq:recurrence_d} for $d(n)$, and a few more facts about the congruences of $(u(n))$ and $(v(n))$. 

\section{A formula for $s(n,k)$ mod $p$}\label{section:formula}
Before we derive the formula, we briefly recall some standard definitions regarding integer partitions. 
 
Let $n$ and $k$ be positive integers. By an \emph {unordered partition $\lambda$ (of $n$ with $k$ parts)} we mean, as usual, a tuple of positive integers, $
\lambda=(\lambda_1,\lambda_2,\dots, \lambda_k)$, with $\lambda_i\leq \lambda_{i+1}$ for $1\leq i<k$, such that $\sum_{i=1}^k \lambda_i = n.$ The numbers $\lambda_i$ are the \emph{parts}. 
We let $\mathcal{P}_{n,k}$ denote the set of unordered partitions of $n$ with $k$ parts, and we let $\p_{n,k}'\subset \p_{n,k}$ be the set of such partitions whose parts are odd numbers.
For a given $\lambda\in \p_{n,k}$, we will let $c_i$ denote the number (possibly $0$) of parts of $\lambda$ whose value is $i$, for $1\leq i\leq n$. 
Thus, the tuple $c(\lambda)=(c_1,c_2,\dots, c_n)$ gives an alternative description of $\lambda$, which we will use freely. (Although each $c_i$ depends on $\lambda$, we choose not to reflect this dependence in the notation, in order to keep it simple, and since it will always be clear from context.)
Finally, observe that $\sum_{i=1}^n ic_i=n$, and $\sum_{i=1}^n c_i=k,$ for each $\lambda\in \p_{n,k}$.

\begin{lemma}[{\cite[pp. 215-216]{Andrews}}]
\label{N}
For any pair $(n,k)$ of positive integers such that $n\geq k$, and any partition $\lambda\in \p_{n,k}$, the number 
\[
\frac{n!}{\prod_{i=1}^{n} i!^{c_i}c_i!}
\]
is an integer. 
\end{lemma}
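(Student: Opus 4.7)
The plan is to interpret the ratio $\frac{n!}{\prod_{i=1}^{n}(i!)^{c_i}c_i!}$ combinatorially, as the number of set partitions of $\{1,2,\ldots,n\}$ having exactly $c_i$ blocks of size $i$ for each $i$. Since any such count is a non-negative integer, this suffices.

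First, I would recall that the multinomial coefficient
\[
\binom{n}{\lambda_1,\lambda_2,\ldots,\lambda_k}=\frac{n!}{\lambda_1!\,\lambda_2!\cdots\lambda_k!}=\frac{n!}{\prod_{i=1}^{n}(i!)^{c_i}}
\]
is a non-negative integer (for instance by iterated application of the classical fact that binomial coefficients are integers), and that it counts the \emph{ordered} tuples $(B_1,B_2,\ldots,B_k)$ of pairwise disjoint, non-empty subsets of $\{1,\ldots,n\}$ with $|B_j|=\lambda_j$ and $\bigcup_{j}B_j=\{1,\ldots,n\}$.

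Next, I would observe that each \emph{unordered} set partition of $\{1,\ldots,n\}$ with exactly $c_i$ blocks of size $i$ arises from exactly $\prod_{i=1}^{n}c_i!$ of these ordered tuples: the $c_i$ blocks of size $i$ can be permuted among themselves in $c_i!$ ways independently across $i$, and different permutations yield genuinely different tuples because pairwise disjoint non-empty subsets are pairwise distinct as sets. Dividing the multinomial coefficient by $\prod_{i=1}^{n}c_i!$ therefore yields the number of unordered set partitions of type $\lambda$, which is an integer.

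I do not anticipate any real obstacle here; the only step requiring a moment of care is the orbit-counting argument, namely that the group $\prod_i S_{c_i}$ acts freely on the set of ordered tuples by permuting blocks of equal size, and this freeness follows at once from the disjointness of the blocks.
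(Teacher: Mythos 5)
Your proof is correct and follows exactly the combinatorial interpretation the paper invokes (via the citation to Andrews and the remark after the lemma): the quantity counts set partitions of an $n$-element set with $c_i$ blocks of size $i$, obtained by dividing the multinomial coefficient by $\prod_i c_i!$ for the free reordering of equal-sized blocks. No issues.
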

\noindent\emph{Remark:} The theorem in \cite{Andrews} is the stronger statement that if 
$S$ is a set with $n$ elements, then $\frac{n!}{\prod_{i=1}^{n} i!^{c_i}c_i!}$ is the number of set partitions of $S$ into $k$ blocks $B_i$, with $|B_i|\leq |B_{i+1}|$ for $1\leq i<k$, such that $|B_i|=\lambda_i.$ 

\begin{theorem}
\label{thm:partitions_def}
For any pair $(n,k)$ of positive integers such that $n\geq k$, we have 
\begin{equation}
\label{eq:partitions_def}
s(n,k)=\sum_{\lambda\in \p'_{2n,2k}}\left[\frac{(2n)!}{\prod_{i=1}^{2n} i!^{c_i}c_i!}
\,\prod_{i=1}^{2n}u\left(\frac{i-1}{2}\right)^{c_i}\,\right].
\end{equation}
\end{theorem}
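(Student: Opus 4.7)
The plan is to unpack the generating-function definition $s(n,k)=\frac{(2n)!}{(2k)!}[z^{2n}]f(z)^{2k}$, where
\[
f(z)=\sum_{j=0}^\infty \frac{u(j)}{(2j+1)!}z^{2j+1},
\]
by a direct multinomial expansion, and then biject the resulting index set with $\p'_{2n,2k}$.

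First, I would apply the multinomial theorem to $f(z)^{2k}$. Since $f$ is a formal power series in which only odd powers of $z$ appear, each monomial in the expansion is indexed by a finitely supported tuple $(b_0,b_1,b_2,\dots)$ of nonnegative integers with $\sum_j b_j=2k$, contributing
\[
\frac{(2k)!}{\prod_j b_j!}\prod_j\left(\frac{u(j)}{(2j+1)!}\right)^{b_j} z^{\sum_j(2j+1)b_j}.
\]
Extracting the coefficient of $z^{2n}$ imposes the constraint $\sum_j(2j+1)b_j=2n$, giving
\[
[z^{2n}]f(z)^{2k}=\sum_{\substack{\sum b_j=2k\\ \sum(2j+1)b_j=2n}}\frac{(2k)!}{\prod_j b_j!}\prod_j\frac{u(j)^{b_j}}{(2j+1)!^{b_j}}.
\]

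The key step is to reindex this sum by $\lambda\in \p'_{2n,2k}$ via $c_{2j+1}:=b_j$ and $c_{2j}:=0$ for all $j\geq 1$. Under this correspondence the two constraints translate exactly into $\sum_i c_i=2k$ (the partition has $2k$ parts) and $\sum_i ic_i=2n$ (the parts sum to $2n$), while the vanishing of the even-indexed $c_i$ is exactly the condition that the parts are odd. Since $c_i=0$ for even $i$, the products $\prod_j b_j!$, $\prod_j(2j+1)!^{b_j}$, and $\prod_j u(j)^{b_j}$ equal $\prod_i c_i!$, $\prod_i i!^{c_i}$, and $\prod_i u((i-1)/2)^{c_i}$ respectively (taken over $1\leq i\leq 2n$, as all other factors are $1$). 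Multiplying by the prefactor $(2n)!/(2k)!$ cancels the $(2k)!$ and yields the stated formula.

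There is essentially no analytic obstacle here, since all series are viewed formally; the only thing to be careful with is the bookkeeping in the bijection between multi-indices and odd-part partitions, and the fact that the support of $(b_j)$ is finite whenever $\sum(2j+1)b_j=2n$ (so that every product is a finite one). Integrality of $s(n,k)$, advertised in the paragraph above the theorem, then follows immediately: each summand is a product of $u\bigl(\tfrac{i-1}{2}\bigr)^{c_i}\in\mathbb{Z}$ with the integer $(2n)!/\prod_i i!^{c_i}c_i!$ supplied by Lemma \ref{N}.
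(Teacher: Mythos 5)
Your proposal is correct and follows essentially the same route as the paper: the paper expands the $2k$-th power as a sum over ordered tuples of odd parts and then groups them into equivalence classes of size $(2k)!/\prod_i c_i!$, which is exactly the content of the multinomial theorem you invoke directly. The reindexing $c_{2j+1}=b_j$ and the cancellation of $(2k)!$ match the paper's computation, and your closing remark on integrality via Lemma \ref{N} is the same observation the paper makes immediately after the theorem.
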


\begin{proof}
We first observe that $\p'_{2n,2k}\neq \emptyset$, since if $n>k$ then $\p'_{2n,2k}$ contains the partition $\lambda$ such that $c(\lambda)$ has $c_1=2k-1, c_{2n-2k+1}=1,$ and $c_i=0$ for all other $i$; while if $n=k$, then $\p'_{2n,2k}$ contains $\lambda$ with $c(\lambda)=(2k,0,\dots, 0)$. 
From \eqref{eq:r} we see that 
\begin{align}
\label{group}
s(n,k)&=\frac{(2n)!}{(2k)!}[z^{2n}]\left(\sum_{\substack{j\geq 1 \\ j \text{ odd}}}\frac{u\left(\frac{j-1}{2}\right)}{j!} z^j\right)^{2k}\nonumber\\
&=\frac{(2n)!}{(2k)!}\sum_{(j_1,j_2,\dots, j_{2k})}
\,\,\prod_{i=1}^{2k}\frac{u\left(\frac{j_i-1}{2}\right)}{j_i!},
\end{align}
where the sum runs over all tuples $j=(j_1,j_2\dots, j_{2k})$ of positive odd integers such that $\sum_{i=1}^{2k}j_i=2n$ (in other words, over all \emph{ordered partitions} of $2n$ into $2k$ odd parts). Call the set of such tuples $\Lambda_{2n,2k}.$ Since $\p'_{2n,2k}$ is nonempty, so is $\Lambda_{2n,2k}$. To each $j\in\Lambda_{2n,2k}$ we associate the unique unordered partition $\lambda\in \p'_{2n,2k}$ obtained by ordering the $j_i$'s in non-decreasing order, and we also associate the tuple $c(\lambda)$. We can define an equivalence relation on $\Lambda_{2n,2k}$ by calling $j$ and $j'$ equivalent if they map to the same $c(\lambda)$ under this association. If $j$ maps to $c(\lambda)=(c_1,\dots, c_{2n})\in \p'_{2n,2k}$, then it is elementary to count that the size of the equivalence class of $j$ is $\frac{(2k)!}{\prod_{i=1}^{2n} c_i!}.$ Furthermore, the product 
$\prod_{i=1}^{2k}\frac{u\left(\frac{j_i-1}{2}\right)}{j_i!}$
in \eqref{group}, as a function of $(j_1,\dots, j_{2k})$, is constant on equivalence classes, and the equivalence classes are indexed by $\p'_{2n,2k}$ in the obvious way. 
Thus, we may rewrite \eqref{group} as
\begin{equation*}
s(n,k)=\frac{(2n)!}{(2k)!}\sum_{\lambda\in\p'_{2n,2k}}
\,\,\left(\frac{(2k)!}{\prod_{i=1}^{2n} c_i!}\,\prod_{i=1}^{2n}\frac{u\left(\frac{i-1}{2}\right)^{c_i}}{i!^{c_i}}\right),
\end{equation*}
which simplifies to \eqref{eq:partitions_def}.
\end{proof}

Henceforth, if $x\in\mathbb{Z}$, and $p\geq 2$ is prime, we let $x_p$ denote the congruence class of $x$ mod $p$.
In light of Lemma \ref{N} and the fact that each $u(n)$ is an integer, we see from \eqref{eq:partitions_def} that $s(n,k)$ is always an integer.
More specifically, each summand in \eqref{eq:partitions_def} 
is a product of integers; therefore we may reduce each summand in \eqref{eq:partitions_def} mod $p$ in the following way to obtain a formula for $s(n,k)_p$. 

\begin{corollary}
\label{cor:partitions_def}
For any pair $(n,k)$ of positive integers such that $n\geq k$, and any prime number $p$, we have
\begin{equation}
\label{eq:partitions_def_p}
s(n,k)_p=\sum_{\lambda\in \p'_{2n,2k}}\left( \left[\frac{(2n)!}{\prod_{i=1}^{2n} i!^{c_i}c_i!}\right]_p \prod_{i=1}^{2n}\left[u\left(\frac{i-1}{2}\right)^{c_i}\right]_p\right),
\end{equation}
where the multiplication in parentheses is of congruence classes, as is the summation over $\p'_{2n,2k}$.
\end{corollary}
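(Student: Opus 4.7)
The plan is to treat the corollary as an immediate consequence of Theorem \ref{thm:partitions_def} together with the fact that reduction modulo $p$ is a ring homomorphism from $\mathbb{Z}$ to $\mathbb{Z}/p\mathbb{Z}$. The only genuine content is checking that every factor in every summand of \eqref{eq:partitions_def} already lies in $\mathbb{Z}$, so that the reduction can be carried out term by term and factor by factor.

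First I would verify the integrality of the multinomial-type quantity. The hypothesis $n\geq k$ gives $2n\geq 2k$, and since $\p'_{2n,2k}\subseteq \p_{2n,2k}$, Lemma \ref{N} applies and yields
\[
\frac{(2n)!}{\prod_{i=1}^{2n} i!^{c_i}c_i!}\in\mathbb{Z}
\]
for each $\lambda\in \p'_{2n,2k}$. Next, each value $u\bigl(\tfrac{i-1}{2}\bigr)$ appearing in the inner product is an integer---note that only odd $i$ contribute, since $c_i=0$ for even $i$ whenever $\lambda$ has only odd parts---and integrality of these follows by induction from the recurrence \eqref{eq:u_recursion} together with $u(0)=1$. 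Consequently every summand of \eqref{eq:partitions_def} is a product of integers, and in particular $s(n,k)\in\mathbb{Z}$, which simultaneously gives the alternative proof of integrality promised in the text preceding the corollary.

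With integrality established, the corollary follows by applying the ring homomorphism $x\mapsto x_p$ to \eqref{eq:partitions_def}: it distributes over the finite outer sum over $\p'_{2n,2k}$ and the finite inner products indexed by $i$, producing \eqref{eq:partitions_def_p} exactly. There is no substantive obstacle; confirming the hypothesis $2n\geq 2k$ for Lemma \ref{N} is the only step that is not entirely automatic.
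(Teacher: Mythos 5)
Your proposal is correct and matches the paper's reasoning exactly: the paper likewise deduces integrality of each factor from Lemma \ref{N} and the integrality of the $u(n)$, and then reduces each summand of \eqref{eq:partitions_def} term by term and factor by factor modulo $p$. Your added remarks (checking $2n\geq 2k$ for Lemma \ref{N}, and noting that $c_i=0$ for even $i$ so the factors $u\bigl(\tfrac{i-1}{2}\bigr)$ only ever occur with odd $i$) are correct details that the paper leaves implicit.
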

\section{Proof of Theorem \ref{main_theorem} (i)}\label{section:odd}
In the previous section we saw that $s(n,k)=\frac{r(n,k)}{2^{n-k}}$ is an integer for all $1\leq k\leq n$, which immediately implies that $r(n,k)$ is even. Thus, \eqref{eq:recurrence_d} implies that in order to show that $d(n)$ is odd for all $n$, it suffices to show that $v(n)$ is odd for all $n$. We will prove this by induction. The first few values of $v(n)$ are given by $(v(n))_{n=0}^\infty = 1,1,47, 7395,\dots,$ which can easily be computed. This establishes the base case. 

Assume now the induction hypothesis that $v(m)$ is odd for all $1\leq m<n$.
We will write $A\equiv B$, for $A,B\in\mathbb{Z}$, to mean that $A$ and $B$ have the same parity. 
We apply the induction hypothesis to simplify the expression in \eqref{eq:v_recursion}, obtaining
\begin{equation}\label{eq:tanay}
v(n)\equiv \frac{1}{2}\sum_{m=1}^{n-1}{2n\choose 2m}=\frac{1}{2}\left [\left(\sum_{m=0}^n {2n\choose 2m}\right)-2\right].\end{equation}
Since 
\begin{align*}
\sum_{m=0}^n{2n\choose 2m}&=\frac{1}{2}\left [\left(\sum_{m=0}^{2n} {2n \choose m}\right) + \sum_{m=0}^{2n} \left({2n\choose m}(-1)^m\right)\right]\\
&=\frac{1}{2}[2^{2n}+0],\\
\end{align*}
we see from \eqref{eq:tanay} that $v(n)\equiv 2^{2n-2}-1\equiv 1,$ as was to be shown.


\section{The behavior of $d(n)$ modulo $p=5$}\label{section:p=5}
\subsection{A formula for $r(n,k)$ mod 5}
Corollary \ref{cor:partitions_def} provides a flexible way to reduce $s(n,k)$ (and hence $r(n,k)$) modulo $p$, and will be our main tool, along with the recurrence relation \eqref{eq:recurrence_d}, in studying the congruences of $d(n)$ modulo primes $p\neq 2$. In the case $p=5$, the reduction \eqref {eq:partitions_def_p} is particularly simple.  Throughout this section the notation $A\equiv B$ will be shorthand for $A\equiv B$ (mod $5$). 
\begin{theorem}[Formula for $r(n,k)$ mod 5]
\label{thm:formula}
For $1\leq k\leq n\leq 5k$ the following congruences hold mod $5$: 
\begin{equation}
\label{eq:r-cases}
r(n,k)\equiv
\begin{cases}
\frac{(2n)!}{\left(\frac{5k-n}{2}\right)! \left(\frac{n-k}{2}\right)!5^\frac{n-k}{2} }& \text{ if }\, n-k \text{ is even}\\[2em]
\frac{2(2n)!}{\left(\frac{5k-n-1}{2}\right)! \left(\frac{n-k-1}{2}\right)!5^\frac{n-k-1}{2}}& \text{ if }\, n-k \text{ is odd}\\
\end{cases}
\end{equation}
If $n>5k,$ then $r(n,k)\equiv 0$. 
\end{theorem}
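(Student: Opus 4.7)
The plan is to apply Corollary~\ref{cor:partitions_def} at $p=5$ after first computing the residues $u(j)\pmod 5$ via the recurrence~\eqref{eq:u_recursion}. The product $(3\cdot 7\cdots(4n-1))^2$ contains the factor $15^2$ once $n\ge 4$, and $(1\cdot 5\cdots(4(n-m)-3))^2$ contains $5^2$ once $n-m\ge 2$, so modulo~$5$ the recurrence collapses for $n\ge 4$ to $u(n)\equiv -\binom{2n+1}{2}u(n-1)$; combined with direct computation of $u(0),\ldots,u(3)$ this yields $u(0)\equiv u(1)\equiv u(2)\equiv 1$ and $u(j)\equiv 0\pmod 5$ for all $j\ge 3$.

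Consequently, in Corollary~\ref{cor:partitions_def} only partitions $\lambda\in\p'_{2n,2k}$ whose parts lie in $\{1,3,5\}$ contribute modulo~$5$. Writing $(a,b,c):=(c_1,c_3,c_5)$, the constraints $a+b+c=2k$ and $a+3b+5c=2n$ force $b=n-k-2c$ and $a=3k-n+c$; this system has no nonnegative solution when $n>5k$, which settles that case of the theorem at once. Otherwise $s(n,k)\equiv\sum_c T_c\pmod 5$, where
\[
T_c=\binom{2n}{a,\,3b,\,5c}\,P(b)\,Q(c),\qquad P(b):=\frac{(3b)!}{(3!)^b b!},\quad Q(c):=\frac{(5c)!}{(5!)^c c!},
\]
so that $T_c$ counts the set partitions of $[2n]$ into $a$ singletons, $b$ triples and $c$ five-blocks.

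The main step -- and the principal obstacle -- is to show that only one value of $c$ contributes modulo~$5$. A Legendre-formula computation (using $s_5(5c)=s_5(c)$) gives $v_5(Q(c))=0$ for every~$c$, so all the $5$-adic weight must come from $P(b)$. The recurrence $P(b)=\binom{3b-1}{2}P(b-1)$, obtained by choosing the two triple-partners of the largest element, telescopes to $P(b)=\prod_{j=1}^{b}\binom{3j-1}{2}$; the factor at $j=2$ equals $\binom{5}{2}=10$, so $5\mid P(b)$ whenever $b\ge 2$. Hence $T_c\equiv 0\pmod 5$ unless $b\in\{0,1\}$, which (together with the parity of $n-k$) forces $c=\lfloor(n-k)/2\rfloor$.

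It then remains to evaluate the unique surviving $T_c$ and multiply by $2^{n-k}$ to pass to $r(n,k)$. Using $2^{n-k}/120^c=1/30^c$ together with $30^c=5^c\cdot 6^c$ and $6\equiv 1\pmod 5$ converts the result into the closed form on the right-hand side of~\eqref{eq:r-cases}: the $b=0$ branch (occurring precisely when $n-k$ is even) yields the ``even'' line directly, while the $b=1$ branch (occurring when $n-k$ is odd) inherits an extra $3!=6$ in the denominator, producing the factor $3^{-1}\equiv 2\pmod 5$ on the ``odd'' line.
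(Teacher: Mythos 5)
Your proposal is correct, and it follows the paper's overall skeleton (reduce $u(j)$ mod $5$ as in Lemma~\ref{lemma:uv}, restrict to partitions with parts in $\{1,3,5\}$, parametrize by $c=c_5$, isolate a single surviving term, then do the $2^{n-k}$ bookkeeping), but you prove the key ``only one term survives'' step by a genuinely different and arguably cleaner argument. The paper's Lemma~\ref{oneterm} compares the $5$-adic valuations of the summands directly via Legendre's formula, showing $V(c)$ is uniquely minimized at $c=\lfloor\frac{n-k}{2}\rfloor$ through a chain of inequalities involving $s_5(\delta)$ and $s_5(2\delta+1)$. You instead factor each summand as $\binom{2n}{a,3b,5c}P(b)Q(c)$ with $P(b)=\frac{(3b)!}{6^b b!}=\prod_{j=1}^{b}\binom{3j-1}{2}$ and note that the factor $\binom{5}{2}=10$ appears as soon as $b\geq 2$; since every factor is an integer (by Lemma~\ref{N}), $5\mid T_c$ for $b\geq 2$, and the parity of $n-k$ then pins down $b\in\{0,1\}$ and hence $c=\lfloor\frac{n-k}{2}\rfloor$. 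This localizes all the divisibility into one explicit product and replaces the valuation estimates with a two-line computation; the observation $\omega_5(Q(c))=0$ is a nice confirmation but is not strictly needed for the vanishing direction. Your final evaluation is also consistent: in the odd case the extra $2$ arises as $2^{n-k}=2\cdot 4^c$ combined with the leftover $1/3!$, i.e.\ $2/6=1/3\equiv 2\pmod 5$, matching the paper's $2^{n-k}(-1)^{\frac{n-k-1}{2}}\equiv 2$. One caveat worth keeping in mind: the paper's heavier valuation machinery is the version that is adapted to the $p\equiv 3\pmod 4$ case in Theorem~\ref{thm:valuation}, so your slicker argument, while preferable for $p=5$, does not obviously replace that later analysis.
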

A graphical plot of Theorem \ref{thm:formula} shows a compelling fractal pattern (see Figure 1 below).

\begin{figure}[h]
\center{\,\,\quad\quad\includegraphics[scale=0.65]{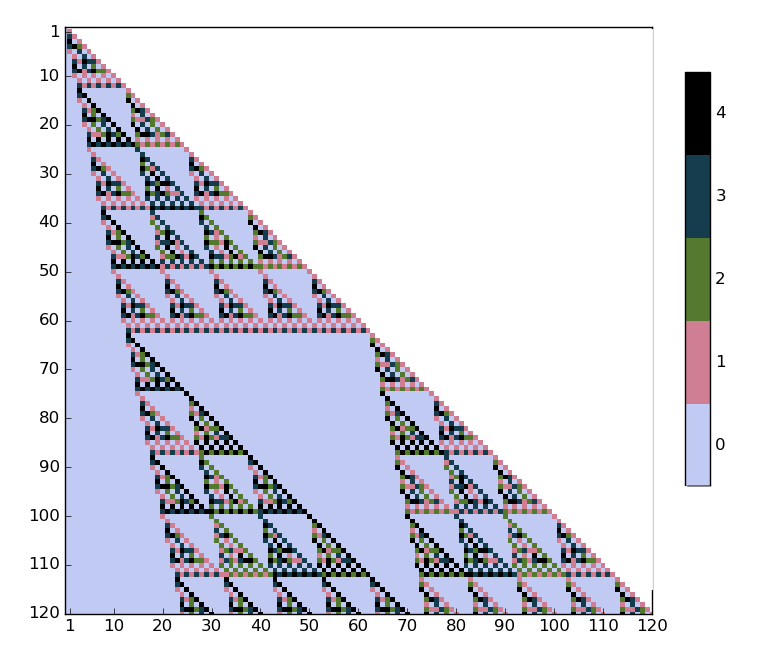}}
\caption{Congruences of $r(n,k)$ mod 5, $1\leq k\leq n< 120$. The rows are indexed by $n$, the columns are indexed by $k$, and the colors indicate residue classes of $r(n,k)$ mod 5, according to the colorbar.}
\end{figure}

Before we begin the proof, we need another lemma about the sequences $(u(n))$ and $(v(n)).$
\begin{lemma}
\label{lemma:uv} 
The sequences $u$ and $v$ satisfy the following congruences mod $5$: 
\begin{align*}
&(i)&(v(n))_{n=0}^\infty \,\,&\equiv (1,1,2, 0, 0, 0, 0,\dots)\\[8 pt]
&(ii)&(u(n))_{n=0}^\infty \,\,&\equiv (1,1,1, 0, 0, 0, 0,\dots)\\
\end{align*}
\end{lemma}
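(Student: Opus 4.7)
The plan is to prove both parts by strong induction on $n$, after verifying the initial values $v(0)=v(1)=1$, $v(2)=47$ and $u(0)=1$, $u(1)=6$, $u(2)=256$ (hence $\equiv 1,1,2$ and $1,1,1\pmod 5$) by direct substitution into the recurrences in Definition \ref{rec}. The guiding idea is that each recurrence collapses dramatically modulo $5$: the ``leading'' products $1\cdot 5\cdots(4n-3)$ and $3\cdot 7\cdots(4n-1)$ pick up the factor $5$ (respectively $15$) once $n$ is large enough, and the remaining cross-terms are killed either by the inductive hypothesis or by the divisibility of a handful of small binomial coefficients by $5$.

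For part (i), the product $1\cdot 5\cdots(4n-3)$ already contains $5$ for every $n\geq 2$, so the leading term of \eqref{eq:v_recursion} vanishes mod $5$ for all $n\geq 2$. Under the inductive hypothesis $v(m)\equiv 0\pmod 5$ for $3\leq m<n$, a summand $\binom{2n}{2m}v(m)v(n-m)$ can contribute mod $5$ only if both indices $m$ and $n-m$ lie in $\{1,2\}$; for $n\geq 5$ no such pair exists. The boundary cases $n=3$ and $n=4$ require checking by hand that the surviving binomial weights $\binom{6}{2}=\binom{6}{4}=15$ and $\binom{8}{4}=70$ are themselves divisible by $5$, which closes the induction.

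For part (ii) the reduction is similar but even cleaner. Whenever $n-m\geq 2$, the factor $(1\cdot 5\cdots(4(n-m)-3))^2$ in \eqref{eq:u_recursion} is divisible by $25$ and drops out mod $5$; only the $m=n-1$ summand, equal to $\binom{2n+1}{2}u(n-1)=n(2n+1)u(n-1)$, survives. Thus the recurrence collapses to
\[
u(n)\equiv \bigl[(3\cdot 7\cdots(4n-1))^2\bigr]_5 \;-\; n(2n+1)\,u(n-1)\pmod 5.
\]
For $n\geq 4$ the bracket contains the factor $15$ and vanishes, while $u(n-1)\equiv 0$ by induction, so $u(n)\equiv 0$. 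The one case left over is $n=3$, where a direct calculation gives $(3\cdot 7\cdot 11)^2\equiv 1$ and $3\cdot 7\cdot u(2)\equiv 1\pmod 5$, so $u(3)\equiv 1-1=0$ as required.

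I do not anticipate any genuine obstacle: the whole argument amounts to identifying which terms of each recurrence survive modulo $5$ and confirming that the few that do survive vanish either by an obvious divisibility or by the inductive hypothesis. The only point needing care is the bookkeeping at the transitional indices $n=3,4$ (for $v$) and $n=3$ (for $u$), where the generic large-$n$ cancellations have not yet taken effect and one must fall back on a direct verification of the surviving terms.
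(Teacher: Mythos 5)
Your proof is correct and follows essentially the same route as the paper: reduce each recurrence modulo $5$ by noting that the leading products acquire a factor of $5$ and that almost all summands die, then induct. The only (minor) difference is that the paper establishes the transitional cases $u(3)$, $v(3)$, $v(4)$ by direct numerical computation of those terms, whereas you derive them from the collapsed recurrences via the divisibility of $\binom{6}{2}$, $\binom{8}{4}$, and $3\cdot 7\cdots(4n-1)$ by $5$ --- a slightly cleaner bookkeeping of the same argument.
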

\begin{proof}
From Definition \ref{rec}, one can calculate by hand or with a computer that the first few terms of the sequence $(u(n))_{n=0}^\infty$ are $1,6,256,28560,6071040.$
Furthermore, it is clear from $\eqref{eq:u_recursion}$ that if $n\geq 4$ we have the simplified recursion 
\[
u(n)\equiv {2n+1\choose 2n-1} u(n-1), 
\]
since the term $(3\cdot 7\cdot 11\cdots (4n-1))^2)$ vanishes, as do all of the terms in the summation except for the term corresponding to $m=n-1$. Then by induction we see that $u(n)\equiv 0$ for all $n\geq4$.

Similarly the initial terms of the sequence $(v(n))_{n=0}^\infty$ are $1,1,47, 7395,$ $2453425, 1399055625.$
For $n\geq 2$ the following congruence holds:
\[
v(n)\equiv -\frac{1}{2}\sum_{m=1}^{n-1}{2n\choose 2m}v(m)v(n-m). 
\]
So if we assume that $v(k)\equiv 0$ for $2\leq k \leq n$, then it is clear that $v(n+1)\equiv 0$, and the lemma follows by induction.
\end{proof}
\noindent \emph{Remark:}
Whereas Theorem \ref{thm:partitions_def} is general for all primes, the lemma we just proved was stated for $p=5$. In fact,
experimental evidence suggests that this lemma can be generalized to the statement that $u(n)$ and $v(n)$ are both congruent to $0$ mod $p$ for all $n\geq \frac{p+1}{2}$, when $p$ is a prime congruent to $1$ mod 4. Indeed our proof in the case $p=5$ is rather ad hoc and in particular makes no use of the binomial coefficients appearing in the recursions for $u$ and $v$. In Section 4 we will prove a similar lemma for $u$ and $v$ in the case $p\equiv 3\, (\text{mod } 4).$
\bigskip

\noindent \emph {Proof of Theorem \ref{thm:formula}.}\,\,
In view of Lemma \ref{lemma:uv}, we may restrict the class of partitions that need to be considered in the summation appearing in \eqref{eq:partitions_def_p}. More specifically, let $\mathcal{P}^{3}_{n,k}\subset \mathcal{P}'_{n,k}$ be the set of partitions of $n$ into $k$ parts among the first three odd positive integers, $1,3,5$. Since $u(n)$ vanishes mod $5$ for $n>2$, and therefore $u\left(\frac{i-1}{2}\right)$ vanishes mod $5$ for $i>5$, summands in \eqref{eq:partitions_def_p} that are indexed by partitions not in $\mathcal{P}^{3}_{2n,2k}$ have a residue of $0$ mod $5$. Thus we obtain an equivalent definition of $s(n,k)_5$ to that in \eqref{eq:partitions_def_p} if we replace the indexing set with $\mathcal{P}^3_{2n,2k}$ and adopt the convention that $s(n,k)_5=0$ for pairs $(n,k)$ such that $\mathcal{P}^3_{2n,2k}$ is empty.

Furthermore, for $n=0,1,2$, $u(n)\equiv 1$. Hence, $u\left(\frac{i-1}{2}\right)\equiv 1$ for $i=1,3,5$, and if we substitute these values of $u\left(\frac{i-1}{2}\right)_5$ into \eqref{eq:partitions_def_p}, we obtain
\begin{equation}
\label{eq:partitions_def_1}
s(n,k)_5=\sum_{\lambda\in \mathcal{P}^{3}_{2n,2k}}\left[\frac{(2n)!}{c_1!3!^{c_3}c_3!5!^{c_5}c_5!}\right]_5,
\end{equation}
with the convention that $s(n,k)_5=0$ if  $\mathcal{P}^{3}_{2n,2k}= \emptyset$.
The expression is already interesting. One immediate implication is that if $5k<n,$ then $r(n,k)_5=s(n,k)_5=0$, since $\mathcal{P}^{3}_{2n,2k}$ is clearly empty (see Figure 1). 

To reduce the sum in \eqref {eq:partitions_def_1} further, we recall that in the field of residues modulo 5, nonzero elements are invertible; therefore, since we've shown that each summand in \eqref {eq:partitions_def_1} is an integer, we can replace $3!$ in the denominator with $1$ and replace $5!=5\cdot 4!$ with $5\cdot(-1)$ without changing the value of the summand's residue mod 5. Thus, we have
\begin{equation}\label{eq:partitions_def_1.5}
s(n,k)_5=\sum_{\lambda\in P^{3}_{2n,2k}}\left[\frac{(2n)!(-1)^{c_5}}{c_1!c_3!c_5!}\right]_5.
\end{equation}

Next, identify elements of $P^{3}_{2n,2k}$ in the obvious way with triples $(c_1,c_3,c_5)$ of non-negative integers satisfying the pair of equations
$$\begin{cases}
\sum_{i=1}^3 ic_i&=2n\\
\sum_{i=1}^3 c_i&=2k.
\end{cases}
$$
For a given pair $(n,k)$, if we fix $c_5$ to be some integer $c$, then this becomes an invertible linear system with
\[
c_1=3k-n+c\, , \quad c_3=n-k-2c.
\] 
There exists $(c_1,c_3,c)\in P^{3}_{2n,2k}$ satisfying the system if and only if $n\leq5k$ and 
\[
\text{max}(0, n-3k)\leq c \leq \left\lfloor \frac{n-k}{2} \right\rfloor.\]
This allows us to rewrite \eqref{eq:partitions_def_1.5} as a summation over a single index parameter:
\begin{equation}\label{partitions_def_2}
s(n,k)_5=
\begin{cases}
\sum\limits_{c=\text{max}(0,n-3k)}^{\lfloor \frac{n-k}{2} \rfloor} \left[\frac{(2n)!(-1)^c}{
(3k-n+c)!(n-k-2c)!c!5^c}\right
]_5 &\, \text{ if }\, n\leq5k\\[1.4em]
0 &\, \text{ if } n>5k.
\end{cases}
\end{equation}

Our next step in the proof is to simplify \eqref{partitions_def_2} further by showing that the summation depends only on the term corresponding to the largest value of the index parameter, namely $c=\lfloor \frac{n-k}{2} \rfloor$, because all other terms are congruent to $0$ mod 5. This will be the content of Lemma \ref{oneterm} below. The proof of the lemma will use the following formula of Legendre \cite{M}, which we record here as a theorem. 
For $p$ a prime number, and $n$ a positive integer, let $\omega_p(n)$ denote the $p$-adic valuation of $n$, meaning that $\omega_p(n)$ is the largest natural number $\alpha$ such that $p^\alpha$ divides $n$. 
\begin{theorem}[Legendre]\label{legendre}
For any positive integer $n$, 
\[
\omega_p(n!)=\frac{n-s_p(n)}{p-1},
\]
where $s_p(n)$ is the sum of the digits in the base-p expansion of $n$. 
\end{theorem}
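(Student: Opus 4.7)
The plan is to reduce the claimed digit-sum identity to Legendre's more elementary formula
\[
\omega_p(n!)=\sum_{i=1}^{\infty}\left\lfloor \frac{n}{p^i}\right\rfloor,
\]
and then to evaluate the right-hand side directly from the base-$p$ expansion of $n$.

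First I would prove the floor-function identity by a double-counting argument. For each $i\geq 1$, exactly $\lfloor n/p^i\rfloor$ of the integers in $\{1,2,\ldots,n\}$ are divisible by $p^i$. Summing over $i$, each $m\leq n$ is counted once for every $i$ with $p^i\mid m$, i.e.\ with multiplicity $\omega_p(m)$. Since $\omega_p(n!)=\sum_{m=1}^n\omega_p(m)$, this gives the identity. The sum is finite because $\lfloor n/p^i\rfloor=0$ once $p^i>n$.

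Second, I would expand $n=\sum_{j=0}^{k}a_j p^j$ with digits $0\leq a_j\leq p-1$ and $a_k\neq 0$. A direct computation gives $\lfloor n/p^i\rfloor=\sum_{j\geq i}a_j p^{j-i}$. Swapping the order of summation in the floor formula, the digit $a_j$ is multiplied by the finite geometric sum $1+p+\cdots+p^{j-1}=(p^j-1)/(p-1)$, so
\[
\omega_p(n!)=\sum_{j=0}^{k}a_j\cdot\frac{p^j-1}{p-1}=\frac{1}{p-1}\Bigl(\sum_{j=0}^{k}a_j p^j-\sum_{j=0}^{k}a_j\Bigr)=\frac{n-s_p(n)}{p-1},
\]
as required.

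There is no serious obstacle here: Legendre's formula is among the oldest identities in elementary number theory, and the whole argument is bookkeeping with finite sums. The only care needed is to justify interchanging the two summations (which is automatic because both are supported on finitely many indices) and to keep consistent base-$p$ notation. Accordingly, I would expect the written proof to be at most a short paragraph citing the floor-function identity and carrying out the geometric-sum manipulation above.
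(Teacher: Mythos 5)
Your proof is correct and complete: the double-counting argument for $\omega_p(n!)=\sum_{i\geq 1}\lfloor n/p^i\rfloor$ and the subsequent geometric-sum manipulation of the base-$p$ digits are exactly the standard derivation of Legendre's digit-sum formula. Note that the paper does not prove this statement at all — it records it as a classical theorem with a citation to the literature — so there is no in-paper argument to compare against; your write-up simply supplies the standard proof that the paper omits.
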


\begin{lemma}\label{oneterm}
For integers $0<k\leq n\leq 5k$, the quantity
\[
V(c):=\omega_5\left(\frac{(2n)!(-1)^c}{
(3k-n+c)!(n-k-2c)!c!5^c}\right),
\]
as a function of $c\in\mathbb{Z}$, is minimized over $\text{max}(0,n-3k)\leq c \leq \lfloor \frac{n-k}{2} \rfloor$ when $c=\lfloor \frac{n-k}{2} \rfloor$ and for no other values of $c.$
\end{lemma}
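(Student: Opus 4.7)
The plan is to apply Legendre's formula (Theorem \ref{legendre}) to rewrite $V(c)$ in terms of base-$5$ digit sums, then minimize the resulting expression directly. The goal is to show $V(c) > V(c_{\max})$ strictly for every $c < c_{\max} := \lfloor(n-k)/2\rfloor$ in the valid range.

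Set $a := 3k-n+c$ and $b := n-k-2c$, so that $a+b+c = 2k$. Applying Legendre's formula to each factorial in
\[
V(c) = \omega_5((2n)!) - \omega_5(a!) - \omega_5(b!) - \omega_5(c!) - c
\]
and using $a+b+c = 2k$ to cancel the linear-in-$c$ terms should yield
\[
4V(c) = 2b + s_5(a) + s_5(b) + s_5(c) - s_5(2n).
\]

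I would then parameterize by $d := c_{\max} - c \geq 0$. Let $b_{\min} := n-k-2c_{\max} \in \{0,1\}$ (the value of $b$ at $c_{\max}$, determined by the parity of $n-k$) and $a_{\min} := 3k-n+c_{\max}$. Then $a = a_{\min}-d$, $c = c_{\max}-d$, and $b = b_{\min}+2d$, so after subtracting $4V(c_{\max})$ the claim reduces to showing $4d + \Delta_a + \Delta_b + \Delta_c > 0$ for every $d \geq 1$, where $\Delta_a := s_5(a_{\min}-d) - s_5(a_{\min})$, $\Delta_c := s_5(c_{\max}-d) - s_5(c_{\max})$, and $\Delta_b := s_5(b_{\min}+2d) - s_5(b_{\min})$.

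The key estimate comes from a second application of Legendre: for integers $M \geq d \geq 0$,
\[
\omega_5\bigl(M!/(M-d)!\bigr) = \tfrac{1}{4}\bigl(d - s_5(M) + s_5(M-d)\bigr) \geq 0,
\]
so $s_5(M-d) - s_5(M) \geq -d$. Applied with $M = a_{\min}$ and $M = c_{\max}$ (both of which exceed $d$ throughout the valid range, as can be checked from $c \geq \max(0, n-3k)$), this gives $\Delta_a, \Delta_c \geq -d$. For $\Delta_b$, since $b_{\min} + 2d \geq 2$ we have $s_5(b_{\min}+2d) \geq 1$, and since $s_5(b_{\min}) \leq 1$, we get $\Delta_b \geq 0$. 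Therefore $4d + \Delta_a + \Delta_b + \Delta_c \geq 4d - 2d + 0 = 2d \geq 2$ for $d \geq 1$, and integrality of $V$ then upgrades this to $V(c) - V(c_{\max}) \geq 1$, proving strict uniqueness of the minimum. The main bookkeeping obstacle is handling the parity of $n-k$ uniformly, which is absorbed cleanly into $b_{\min}$; the digit-sum estimate $s_5(M-d) - s_5(M) \geq -d$ is then exactly strong enough to be overcome by the linear term $4d$.
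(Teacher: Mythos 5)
Your argument is correct, and the identity at its core checks out: with $a=3k-n+c$, $b=n-k-2c$ one has $a+b+c=2k$ and $2b=2n-2k-4c$, so Legendre's formula applied to every factorial does give the exact closed form $4V(c)=2b+s_5(a)+s_5(b)+s_5(c)-s_5(2n)$, and your bounds $\Delta_a,\Delta_c\ge-d$ and $\Delta_b\ge0$ then yield $4\bigl(V(c)-V(c_{\max})\bigr)\ge 2d$. This uses the same two pillars as the paper's proof --- Legendre's formula and subadditivity of the base-$5$ digit sum --- but organizes them differently. The paper never writes down an exact formula for $V(c)$: it estimates the difference $V(c)-V(c_{\max})$ termwise via the inequality $\omega_5((a+b)!)-\omega_5(a!)\ge\omega_5(b!)$ (integrality of binomial coefficients), which collapses everything to the single-variable bound $2\omega_5(\delta!)-\omega_5((2\delta+1)!)+\delta$ with $\delta=c_{\max}-c$, and only then invokes Legendre to conclude $\delta-\tfrac{1}{2}s_5(\delta)+\tfrac{1}{4}(s_5(2\delta+1)-1)>0$. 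Your route makes the role of the linear constraint $a+b+c=2k$ transparent, isolates $s_5(2n)$ as a constant that cancels in the comparison, and gives the slightly sharper conclusion $V(c)-V(c_{\max})\ge\lceil d/2\rceil$; the paper's route avoids the global bookkeeping at the price of a more delicate final estimate. Two small points of hygiene: you only need $a_{\min}-d\ge0$ and $c_{\max}-d\ge0$ (not strict excess), which hold because $c\ge\max(0,n-3k)$ forces $a,c\ge0$ on the whole range; and the degenerate values $a=0$ or $c=0$ are harmless since $s_5(0)=0$ and $\omega_5(0!)=0$ keep all the identities valid.
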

\begin{proof}
\begin{sloppypar}Assume $k+1<n<5k-1,$ as otherwise there is nothing to check. Let ${c\in \{\text{max}(0,n-3k), \cdots, \lfloor \frac{n-k}{2} \rfloor-1\}}$, and
let $\delta=\lfloor\frac{n-k}{2}\rfloor - c>0.$ Then,
\end{sloppypar} 
\begin{align}
\label{eq:val}
V(c)-V\left(\left\lfloor\frac{n-k}{2}\right\rfloor\right)&=V(c)-V(c+\delta)\notag\\
&=\omega_5((3k-n+c+\delta)!)-\omega_5((3k-n+c)!)\notag\\
&\,\,+\omega_5((n-k-2(c+\delta))!)-\omega_5((n-k-2c)!)\notag\\
&\,\,+\omega_5((c+\delta)!)-\omega_5(c!)\notag\\
&\,\,+\omega_5(5^{c+\delta})-\omega_5(5^c).\notag\\
\end{align}
Each line of the summation contains a difference that we would like to estimate from below. To do that, we note the
general fact that if $a$ and $b$ are positive integers, then
\begin{align*}\label{small-lemma}
&\omega_5((a+b)!)\,=\,\omega_5(a!)+\omega_5(b!)+\omega_5\left ({a+b \choose a}\right);\, \text{ hence,  } \\
&\omega_5((a+b)!)-\omega_5(a!)\,\geq\, \omega_5(b!).
\end{align*}
\noindent We also note that $n-k-2(c+\delta)=n-k-2\lfloor \frac{n-k}{2}\rfloor \in \{0,1\}$
and $n-k-2c\in \{2\delta, 2\delta +1\}.$
Therefore, we can bound from below each line in \eqref{eq:val} to obtain the estimate
\begin{align*}
V(c)-V\left(\left\lfloor\frac{n-k}{2}\right\rfloor\right)&\geq 2\omega_5(\delta!)-\omega_5((2\delta +1)!)+\delta.
\end{align*}

An application of Theorem \ref{legendre} now yields
\begin{align*}
V(c)-V\left(\left\lfloor\frac{n-k}{2}\right\rfloor\right)&\geq 2\frac{\delta-s_5(\delta)}{4}-\frac{2\delta +1 - s_5(2\delta+1)}{4}+\delta\\
&=\delta -\frac{s_5(\delta)}{2}+\frac{1}{4}(s_5(2\delta+1)-1)\\
&> \delta-s_5(\delta)\\
&\geq0, 
\end{align*}
where the last two inequalities amount to the simple fact that for $p$ prime, any integer $k>1$ satisfies $1< s_p(k)\leq k$.
We've shown that $V(c)$ assumes its smallest value uniquely at $c=\lfloor \frac{n-k}{2}\rfloor.$
\end{proof}
\noindent \emph {Proof of Theorem \ref{thm:formula}, continued.}\,\,
By the lemma, all of the summands in \eqref{partitions_def_2}, except the one indexed by $c=\lfloor \frac{n-k}{2}\rfloor$, must vanish mod 5, since they have positive valuation. The remaining summand may or may not vanish. In any case, we have the following simplified formula for $s(n,k)_5$, $1\leq k\leq n\leq 5k.$
\begin{equation}
\label{eq:s-cases}
s(n,k)\equiv
\begin{cases}
\frac{(2n)!(-1)^{\frac{n-k}{2}}}{\left(\frac{5k-n}{2}\right)! \left(\frac{n-k}{2}\right)!5^\frac{n-k}{2} }& \text{ if }\, n-k \text{ is even}\\[2em]
\frac{(2n)!(-1)^{\frac{n-k-1}{2}}}{\left(\frac{5k-n-1}{2}\right)! \left(\frac{n-k-1}{2}\right)!5^\frac{n-k-1}{2}}& \text{ if }\, n-k \text{ is odd}\\
\end{cases}
\end{equation}

Now we want to translate this into a formula for $r(n,k)_5=2^{n-k}_5s(n,k)_5$. The congruence of $(n-k)$ modulo 4 determines the congruence of $2^{n-k}$ modulo 5, as well as the sign of $(-1)^\frac{n-k}{2}$ (respectively $(-1)^\frac{n-k-1}{2}$) in the case $n-k$ is even (respectively odd). However, it turns out that we need only consider parity, since one can check routinely that
\begin{align*}
2^{n-k}(-1)^{\frac{n-k}{2}}\,\,&\equiv \,1\, \,\,\,\text{ if } n-k  \text{ is even,}\,\, \text{ and }\\
2^{n-k}(-1)^{\frac{n-k-1}{2}}&\equiv \,2 \, \,\,\,\text{ if } n-k \text{ is odd}.\\
\end{align*}
Combined with \eqref{eq:s-cases}, this completes the proof of Theorem \ref{thm:formula}. $\hfill\square$

\subsection{Proof of Theorem \ref{main_theorem} (ii)}
Now that we have a nice expression for $r(n,k)_5$, we return to the main objective of this section, proving Theorem \ref{main_theorem} (ii).
\begin {lemma}\label{lem:step}
In order to prove Theorem \ref{main_theorem} (ii), it suffices to prove the following:
For $n\geq 3$, 
\begin{equation}\label{eq:even-odd}
\sum_{\substack{\frac{n}{5}\leq k\leq n\\
k \text{ even}}}r(n,k)\equiv \sum_{\substack{\frac{n}{5}\leq k\leq n\\
k \text{ odd}}}r(n,k)\equiv 0.
\end{equation}
\end{lemma}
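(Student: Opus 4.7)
The plan is a strong induction on $n$. The base cases $n=1,2$ are handled by the explicit values $d(1)=1$ and $d(2)=-1$, which satisfy $d(n)\equiv(-1)^{n+1}\pmod 5$. For the inductive step, fix $n\geq 3$ and assume $d(k)\equiv(-1)^{k+1}\pmod 5$ for $1\leq k<n$.

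Since $n\geq 3$, Lemma \ref{lemma:uv}(i) gives $v(n)\equiv 0\pmod 5$. Substituting this and the inductive hypothesis into the recurrence \eqref{eq:recurrence_d} yields
\[
d(n)\equiv -\sum_{k=1}^{n-1}(-1)^{k+1}r(n,k)\,=\,\sum_{k=1}^{n-1}(-1)^k r(n,k)\pmod 5.
\]
A direct inspection of the definition \eqref{eq:r} shows that $s(n,n)=1$: the generating function $\sum_{j\geq 0}\frac{u(j)}{(2j+1)!}z^{2j+1}$ has lowest-order term $z$, so its $2n$-th power has $[z^{2n}]=1$. Hence $r(n,n)=2^0 s(n,n)=1$, and the target congruence $d(n)\equiv(-1)^{n+1}\pmod 5$ becomes equivalent to
\[
\sum_{k=1}^{n}(-1)^k r(n,k)\equiv 0\pmod 5,
\]
upon absorbing $(-1)^{n+1}=-(-1)^n r(n,n)$ into the sum as its $k=n$ term.

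Finally, Theorem \ref{thm:formula} shows $r(n,k)\equiv 0\pmod 5$ whenever $n>5k$, so only the range $n/5\leq k\leq n$ contributes to the last sum. Splitting this range by the parity of $k$ rewrites the sum as the difference of the two sums appearing in \eqref{eq:even-odd}, both of which vanish by the hypothesis of the lemma. I do not foresee any genuine obstacle; the only care needed is in the sign bookkeeping around the $k=n$ term and in verifying the two base cases so that the induction (which requires $n\geq 3$ in order to invoke $v(n)\equiv 0$) is well-founded.
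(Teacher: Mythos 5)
Your proposal is correct and follows essentially the same route as the paper: reduce the inductive step via $v(n)\equiv 0$ for $n\geq 3$, use $r(n,n)=1$ and the vanishing of $r(n,k)$ for $n>5k$ to fold the target congruence into the alternating sum over $n/5\leq k\leq n$, and split by parity. Your base-case values $d(1)=1$, $d(2)=-1$ are the correct ones (consistent with the sequence as listed in the introduction), and the sign bookkeeping checks out.
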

\begin{proof}
Assume that \eqref{eq:even-odd} holds. 
Then 
\[
\sum_{\frac{n}{5}\leq k\leq n}r(n,k)(-1)^k= \sum_{\substack{\frac{n}{5}\leq k\leq n\\
k \text{ even}}}r(n,k)-\sum_{\substack{\frac{n}{5}\leq k\leq n\\
k \text{ odd}}}r(n,k)\equiv 0.
\]
Subtracting $r(n,n)(-1)^n$ from the left and right sides, we obtain
\begin{equation}\label{eq:step1}
\sum_{\frac{n}{5}\leq k\leq n-1}r(n,k)(-1)^k\equiv r(n,n)(-1)^{n+1}.
\end{equation}
Now a quick application of Theorem \ref{thm:formula} shows that $r(n,n)\equiv 1$ for all $n$ (in fact, it's not hard to deduce from \eqref{eq:r} and the fact that $u(1)=1$ that $r(n,n)=1$ for all $n$), and we have also observed above that $r(n,k)\equiv 0$ when $5k<n$. Therefore, from \eqref{eq:step1} we obtain 
\begin{equation}\label{eq:step2}
\sum_{k=1}^{n-1}r(n,k)(-1)^k\equiv (-1)^{n+1}.
\end{equation}
We will now prove by induction that $d(n)\equiv (-1)^{n+1}$ for $n\geq 1$.
The cases $n=1$ and $n=2$ can be checked directly, since $d(1)=-1$ and $d(2)=51.$
Also from \eqref{eq:recurrence_d} and Lemma \ref{lemma:uv}, we see that when $n\geq 3$, the following holds:
\[
d(n)\equiv - \sum_{k=1}^{n-1}r(n,k)d(k).
\]
Thus, if $n\geq 3$ and we assume the induction hypothesis that $d(k)\equiv (-1)^{k+1}$ for all $1\leq k <n$, it follows that
\[
d(n)\equiv - \sum_{k=1}^{n-1}r(n,k)(-1)^{k+1}\equiv \sum_{k=1}^{n-1}r(n,k)(-1)^{k}.
\]
But the right-hand-side is congruent mod 5 to $(-1)^{n+1}$, by \eqref{eq:step2}. This verifies the induction step. 
Thus, the truth of \eqref{eq:even-odd} implies Theorem \ref{main_theorem} (ii). 
\end{proof}

We will now use some concepts from group theory to verify that \eqref{eq:even-odd} holds. For $n$ a positive integer, let $S_n$ denote the symmetric group on $n$ letters, and recall that every element of $S_n$ has a unique decomposition as a product of disjoint cycles. Let $X_n$ be the set of elements $x\in S_n$ 
such that $x^5=1$.
For any non-negative integer $k\leq n$, let $X_n^k$ denote the set of elements $x\in S_n$ such that $x$ can be written as a disjoint product of $k$ five-cycles and $n-5k$ one-cycles. Then
\begin{equation}\label{eq:union}
X_n=\bigcup_{k=0}^{\lfloor \frac{n}{5}\rfloor} X_n^k~.
\end{equation}
The connection to Theorem \ref{thm:formula} is the following lemma. 
\begin{lemma}\label{lem:lem} For $n>3$, 
\begin{align*}
&(i)& |X_{2n}|&=\sum_{\substack{\frac{n}{5}\leq k\leq n\\
 n-k \text{ even}}} \frac{(2n)!}{\left(\frac{5k-n}{2}\right)! \left(\frac{n-k}{2}\right)!5^{\frac{n-k}{2}}}~,\\[2em]
&(ii)&2(2n)(2n-1)(2n-2)\cdot \,|X_{2n-3}|&=\sum_{\substack{\frac{n}{5}\leq k<n\\ n-k \text{ odd}}} \frac{2(2n)!}{\left(\frac{5k-n-1}{2}\right)! \left(\frac{n-k-1}{2}\right)!5^{\frac{n-k-1}{2}}}~.
\end{align*}
\end{lemma}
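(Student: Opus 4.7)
The plan is to recognize each summand on the right-hand side of (i) and (ii) as a conjugacy class size in the symmetric group. Recall that a permutation in $S_N$ whose cycle decomposition has $c_i$ cycles of length $i$ lies in a conjugacy class of size $\frac{N!}{\prod_i i^{c_i}\,c_i!}$. For an element of $X_N^j$ only $c_1=N-5j$ and $c_5=j$ are nonzero, so $|X_N^j|=\frac{N!}{5^j\,j!\,(N-5j)!}$, and by the disjoint union \eqref{eq:union} we have
\[
|X_N|=\sum_{j=0}^{\lfloor N/5\rfloor}\frac{N!}{5^j\,j!\,(N-5j)!}.
\]

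For (i), take $N=2n$ and reparametrize the right-hand side by $j=\frac{n-k}{2}$, which is an integer since $n-k$ is even. Then $k=n-2j$ and $\frac{5k-n}{2}=2n-5j$, so the summand becomes $\frac{(2n)!}{(2n-5j)!\,j!\,5^j}=|X_{2n}^j|$. The range $\frac{n}{5}\le k\le n$ with $n-k$ even translates to $0\le j\le\lfloor 2n/5\rfloor$, which is exactly the index set in the decomposition of $|X_{2n}|$ above.

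For (ii), take $N=2n-3$ and rewrite the prefactor as $2(2n)(2n-1)(2n-2)=\frac{2\,(2n)!}{(2n-3)!}$, giving
\[
2(2n)(2n-1)(2n-2)\,|X_{2n-3}|=\sum_{j=0}^{\lfloor(2n-3)/5\rfloor}\frac{2\,(2n)!}{5^j\,j!\,(2n-3-5j)!}.
\]
Setting $j=\frac{n-k-1}{2}$ on the right-hand side of the lemma yields $k=n-2j-1$ and $\frac{5k-n-1}{2}=2n-5j-3$, so the summand matches term-by-term. The only point of any subtlety lies in matching the index ranges: the lower bound $j=0$ corresponds to $k=n-1$, and for the upper bound one notes that the parity condition $n-k$ odd forces $5k-n$ to be odd, so the inequality $k\ge n/5$ (equivalently $5k\ge n$) automatically strengthens to $5k\ge n+1$, i.e., $2n-5j-3\ge 0$, giving $j\le\lfloor(2n-3)/5\rfloor$. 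Beyond this parity bookkeeping and the classical cycle-type count, there are no real obstacles.
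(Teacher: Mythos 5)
Your proposal is correct and follows essentially the same route as the paper: both identify each summand as a conjugacy-class size $|X_N^j|=\frac{N!}{(N-5j)!\,j!\,5^j}$ and then match index sets via the substitution $j=\frac{n-k}{2}$ (resp.\ $j=\frac{n-k-1}{2}$), with your version carrying out the reparametrization by explicit substitution where the paper argues via equal cardinalities of the two index sets. The parity observation that $n-k$ odd forces $5k\geq n+1$ is a correct and complete way to settle the upper endpoint in (ii).
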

\begin{proof}
\begin{sloppypar} 
Fix $n>3$. For each $k$, $0\leq k\leq n$, $X_n^k$ is a conjugacy class in $S_n$ with cardinality
\begin{equation*}\label{eq:card-1}
|X_n^k|=\frac{n!}{(n-5k)!k!5^k}
\end{equation*}
(see e.g. \cite[Prop. 11 and Exercise 33 in Sec. 4.3]{Dummit}),
and from \eqref{eq:union}, \[|X_{2n}|=\sum_{0\leq k \leq \frac{2n}{5}}|X_{2n}^k|
=\sum_{0\leq k \leq \frac{2n}{5}}\frac{(2n)!}{(2n-5k)!k!5^k}
.\]
Therefore, to prove part (i) of the lemma, we must show that the quantity $\frac{n-k}{2}$ assumes every value in the set $T_1=\{0,1,\dots, \lfloor \frac{2n}{5}\rfloor\}$ exactly once as $k$ ranges over the set {\sloppy$T_2=\{k:\lceil\frac{n}{5}\rceil\leq k\leq n, \, n-k \text{ even}\}$.}
This is not hard to see, since the change of variable $k\mapsto \frac{n-k}{2}$ maps $n$ to $0$, and is linear with first difference $-1/2$, while both $T_1$ and $T_2$ have the same cardinality, as one can deduce from a simple analysis of the cases of the congruence  mod $5$ of $n$.\end{sloppypar} 

Similarly,
 \[|X_{2n-3}|=\sum_{0\leq k \leq \frac{2n-3}{5}}|X_{2n-3}^k|=
 \sum_{0\leq k \leq \frac{2n-3}{5}}\frac{(2n-3)!}{(2n-3-5k)!k!5^k},
 \]
so to prove part (ii) we must show that the quantity $\frac{n-k-1}{2}$ assumes every value in the set 
$\{0,1,\dots, \lfloor \frac{2n-3}{5}\rfloor\}$ exactly once as $k$ ranges over $\{k:\lceil\frac{n}{5}\rceil\leq k\leq n-1, \, n-k \text{ odd}\}.$ This can be deduced from the change of variables $k\mapsto \frac{n-k-1}{2}$ and the same type of argument as before. 
\end{proof}
Lemma \ref{lem:lem} and Theorem \ref{thm:formula} together imply that if $n>3$ is even, then $\displaystyle{|X_{2n}|\equiv \sum_{\substack{\frac{n}{5}\leq k\leq n\\
k \text{ even}}}r(n,k)}$, and an integer multiple of $|X_{2n-3}|$ is congruent mod $5$ to
$\displaystyle{\sum_{\substack{\frac{n}{5}\leq k\leq n\\
k \text{ odd}}}r(n,k)}$. Therefore, in order to verify that \eqref{eq:even-odd} holds for $n>3$ even, it suffices to show that $|X_{2n}|\equiv|X_{2n-3}|\equiv 0$. This follows from a theorem of Frobenius (see e.g. \cite{F}).
\begin{theorem}[Frobenius]\label{frobenius} 
Let $G$ be a finite group whose order is divisible by a positive integer $m$. Then $m$ divides the cardinality of the set of solutions $x$ in $G$ to the equation $x^m=1$.
\end{theorem}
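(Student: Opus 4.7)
The plan is to establish Frobenius's theorem by strong induction on $m$. Set $N_m := |\{x \in G : x^m = 1\}|$; the base case $m = 1$ is immediate.

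For the inductive step, the key idea is to exploit the set $T_m := \{(g_1, \dots, g_m) \in G^m : g_1 g_2 \cdots g_m = 1\}$, of cardinality $|G|^{m-1}$ (choose $g_1, \dots, g_{m-1}$ freely; $g_m$ is then determined), together with the cyclic rotation action of $\mathbb{Z}/m\mathbb{Z}$ on $T_m$. This action is well-defined because $g_1 \cdots g_m = 1$ implies $g_2 \cdots g_m g_1 = g_1^{-1}(g_1 \cdots g_m) g_1 = 1$. Orbits have size $d$ for various $d \mid m$. The size-$1$ orbits are precisely the constant tuples $(g, \dots, g)$ with $g^m = 1$, contributing $N_m$ in total, while for each $d > 1$ the orbits of size $d$ come in chunks of $d$; the tuples in such orbits have period exactly $d$ and can be parametrized (via Möbius inversion on divisors) in terms of $N_{m/d}$ and powers of $|G|$.

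Combining this orbit decomposition with the equality $|T_m| = |G|^{m-1}$ yields a congruence expressing $N_m \pmod m$ in terms of the quantities $N_{m/d}$ for proper divisors $d > 1$ of $m$. For each such $d$, the inductive hypothesis applied to $m/d < m$ (which still divides $|G|$, since $m/d \mid m \mid |G|$) provides $m/d \mid N_{m/d}$. Combined with the elementary fact that $m \mid |G|^{m-1}$ whenever $m \mid |G|$ and $m \geq 2$ (a prime-by-prime check using $v_p(m) \leq m-1$), these divisibilities should collapse into the desired conclusion $m \mid N_m$.

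The main obstacle is organizing the induction so that the mixed divisibilities from orbits of intermediate sizes combine cleanly into a single divisibility by $m$. Frobenius's original argument manages this via an inclusion-exclusion over divisors of $m$; a cleaner modern route first handles the prime-power case $m = p^k$ by iterating the McKay-style $\mathbb{Z}/p\mathbb{Z}$ double-counting applied to $T_{p^j}$ for $j = 1, \dots, k$, and then lifts to composite $m$ by treating each prime-power factor separately and recombining via the Chinese remainder theorem.
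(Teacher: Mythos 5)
There is a genuine gap here — in fact two. First, note that the paper does not prove this statement at all: it is quoted as a classical theorem with a citation to Finkelstein's survey \cite{F}, so any complete proof is already going beyond what the paper does. Your rotation-action setup is correct as far as it goes ($|T_m|=|G|^{m-1}$, the action is well defined, fixed points are the constant tuples, tuples of exact period $d$ number $P_d$ with $d\mid P_d$, and the tuples of period dividing $d$ number $|G|^{d-1}N_{m/d}$), and it does prove the case $m=p$ prime (Cauchy/McKay). But the congruences it produces are provably too weak for the inductive step, already at $m=p^2$. Writing them out: the orbits of size $p$ give $p\mid P_p=|G|^{p-1}N_p-N_{p^2}$, which (using $p\mid |G|$) yields only $p\mid N_{p^2}$; the orbits of size $p^2$ give $p^2\mid P_{p^2}=|G|^{p^2-1}-|G|^{p-1}N_p$, which is automatically satisfied when $p^2\mid |G|$ and carries no information about $N_{p^2}$. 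Since every quantity in the orbit decomposition is a $\mathbb{Z}$-linear combination of the $N_{m/d}$ and powers of $|G|$, the method cannot distinguish $N_{p^2}\equiv 0$ from $N_{p^2}\equiv p \pmod{p^2}$. So "iterating the McKay-style double-counting" does not handle the prime-power case, and the divisibilities do not "collapse" as hoped.

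Second, the proposed reduction of composite $m$ to prime powers "via the Chinese remainder theorem" is not justified. If $m=ab$ with $\gcd(a,b)=1$, an element $x$ with $x^m=1$ corresponds to a \emph{commuting} pair $(y,z)$ with $y^a=z^b=1$, so $N_m$ is not $N_aN_b$, and knowing $a\mid N_a$ and $b\mid N_b$ gives no control over $N_m$. Making this reduction work is essentially the content of the Isaacs--Robinson proof, and it requires induction on the order of the group (applying the theorem to centralizers $C_G(y)$ of elements $y$ of prime-power order), not merely induction on $m$ with $G$ fixed as you propose. To repair the argument you would need to switch to such a double induction (on $|G|$ and on $m$), or follow Frobenius's original inclusion--exclusion over subgroups; the cyclic-rotation trick alone cannot reach the full statement.
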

Since $X_n$ is precisely the set of solutions to the equation $x^5=1$ in $S_n$, the theorem implies that $|X_{2n}|\equiv |X_{2n-3}|\equiv 0$ for even $n>3$.

Similarly, if $n>3$ is odd, 
then $\displaystyle{|X_{2n}|\equiv \sum_{\substack{\frac{n}{5}\leq k\leq n\\
k \text{ odd}}}r(n,k)}$, and an integer multiple of $|X_{2n-3}|$ is congruent mod 5 to
$\displaystyle{\sum_{\substack{\frac{n}{5}\leq k\leq n\\
k \text{ even}}}r(n,k)}$, and we again apply Theoerem \ref{frobenius} to verify \eqref{eq:even-odd}. Finally, if $n=3$ we can check the validity of \eqref{eq:even-odd} by directly computing from $\eqref{eq:r-cases}$ that $r(3,1)\equiv 4, r(3,2)\equiv 0,$ and $r(3,3)\equiv1$.
This verifies \eqref{eq:even-odd} for all $n\geq 3$ and finishes the proof of Theorem \ref{main_theorem} (ii).

\section{Vanishing of $d(n)$ modulo primes $p=4k+3$}
\label{section:3mod4}
\subsection{A vanishing theorem for $u(n)$ and $v(n)$}
Throughout Section \ref{section:3mod4}, $p$ will always denote a prime congruent to $3$ mod $4$, $A\equiv B$ will be shorthand for $A\equiv B$ (mod $p$), and we define $n_0:=\frac{p^2-1}{2}.$
We begin with a theorem about the congruences of $(u(n))$ and $(v(n))$ modulo $p$, similar to Lemma \ref{lemma:uv} above.
\begin{theorem}
\label{thm:uv}
The sequences $(u(n))_{n=0}^\infty$ and $(v(n))_{n=0}^\infty$ satisfy
\begin{itemize}
\item[(i)]
\[u\left(\frac{p-1}{2}\right)\equiv 0,\]
\item[(ii)]
\[u(n)\equiv 0 \text { for }n\geq n_0,\]
\item[(iii)]
\[v(n)\equiv 0 \text { for }n > n_0.\]
\end{itemize}
\end{theorem}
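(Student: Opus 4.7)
The plan is to treat (i) as a short standalone calculation and then derive both (ii) and (iii) by strong induction, with Lucas' theorem on binomial coefficients mod $p$ as the combinatorial engine. Two elementary divisibility facts will underpin everything. Writing $b(n) = (3 \cdot 7 \cdots (4n-1))^2$ and $a(k) = (1 \cdot 5 \cdots (4k-3))^2$, one has $b(n) \equiv 0 \pmod{p}$ whenever $n \geq (p+1)/4$ (since $p \equiv 3 \pmod 4$ puts $p$ itself into the factor list), and $a(k) \equiv 0 \pmod{p}$ whenever $k \geq (3p+3)/4$ (since $3p \equiv 1 \pmod 4$ puts $3p$ into the factor list). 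For (i), I specialize \eqref{eq:u_recursion} at $n = (p-1)/2$: then $2n+1 = p$, so $\binom{p}{2m+1} \equiv 0 \pmod p$ for every $0 \leq m < (p-1)/2$ kills the entire sum, while the leading term $b((p-1)/2)$ vanishes by the first divisibility fact.

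For (ii), I would run strong induction on $n \geq n_0$ with the hypothesis that $u(m) \equiv 0 \pmod p$ for $m \in \{(p-1)/2\} \cup [n_0, n)$, using (i) as input. The leading $b(n)$ in \eqref{eq:u_recursion} vanishes mod $p$. For each summand $\binom{2n+1}{2m+1} a(n-m) u(m)$, I split on $m$: if $m \in [n_0, n)$ or $m = (p-1)/2$, then $u(m) \equiv 0$ by hypothesis. Otherwise $m < n_0$ with $m \neq (p-1)/2$, so $2m+1 < p^2 \leq 2n+1$. If in this regime $\binom{2n+1}{2m+1} \not\equiv 0 \pmod p$, then Lucas' theorem forces the base-$p$ digits of $2m+1$ to be digit-wise bounded by those of $2n+1$; since $2m+1$ has only its bottom two digits nonzero, this yields $2m+1 \leq (2n+1) \bmod p^2$, whence $2(n-m) \geq p^2$. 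Parity (the left side is even, $p^2$ is odd) upgrades this to $n-m \geq (p^2+1)/2 = n_0 + 1$, and the inequality $n_0 + 1 \geq (3p+3)/4$, i.e. $2p^2 - 3p - 1 \geq 0$, holds for all $p \geq 3$, so the second divisibility fact gives $a(n-m) \equiv 0$. The base case $n = n_0$ falls out of the same argument, where $2n+1 = p^2$ makes the only Lucas-surviving $2m+1$ equal to $p^2$, which is outside the summation range.

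Part (iii) uses the same template applied to the symmetric convolution identity $\sum_{m=0}^{n} \binom{2n}{2m} v(m) v(n-m) = 2^n a(n)$, which is \eqref{eq:v_recursion} rearranged. I induct on $n > n_0$ with hypothesis $v(j) \equiv 0 \pmod p$ for $n_0 < j < n$. The right-hand side is $\equiv 0 \pmod p$ by the $a$-fact. In each middle summand either $m \in (n_0, n)$ and $v(m) \equiv 0$ by induction, or $m \leq n_0$ and Lucas kills $\binom{2n}{2m}$, or $m \leq n_0$ with the binomial nonzero mod $p$ — but the same digit estimate then forces $n-m \geq n_0 + 1$, so $v(n-m) \equiv 0$ by induction. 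The only borderline case is $m = n_0$, where $v(n_0)$ is not known to vanish, but $\binom{2n}{p^2-1} \not\equiv 0 \pmod p$ forces $2n \equiv p^2 - 1 \pmod{p^2}$; for $n > n_0$ this pushes $n - m = n - n_0$ to at least $p^2 > n_0$, again under the induction. Dividing by $2$ (invertible mod $p$) delivers $v(n) \equiv 0$.

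The main obstacle I expect is making the Lucas hand-off airtight at the boundaries: one must verify that every $m$ for which $\binom{2n+1}{2m+1}$ (respectively $\binom{2n}{2m}$) is nonzero mod $p$ and $u(m)$ (respectively $v(m)$) is not already known to vanish automatically lands in the regime where $a(n-m)$ vanishes (respectively where $n-m > n_0$). The whole mechanism hinges on the single numerical inequality $n_0 + 1 \geq (3p+3)/4$ and on the parity observation that $p^2$ is odd while $2(n-m)$ is even; both are easy, but the bookkeeping around edge cases (small $m$, $m = (p-1)/2$, and $m = n_0$ in part (iii)) is where careful writing is needed.
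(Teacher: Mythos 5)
Your proof is correct, and it reaches the same three conclusions from the same three basic ingredients the paper uses --- the vanishing of $(3\cdot7\cdots(4n-1))^2$ for $n\geq\frac{p+1}{4}$, the vanishing of $(1\cdot5\cdots(4k-3))^2$ for $k\geq\frac{3(p+1)}{4}$, and a criterion for $\binom{a}{b}\equiv 0\pmod p$ --- but it organizes them differently in part (ii). Where you invoke Lucas' theorem to conclude that a nonvanishing $\binom{2n+1}{2m+1}$ with $2m+1<p^2\leq 2n+1$ forces $2m+1\leq(2n+1)\bmod p^2$ and hence $2(n-m)\geq p^2$, the paper proves the logically equivalent contrapositive as Lemma \ref{lem:tool} via Legendre's formula, so the two tools are interchangeable here. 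The genuine structural difference is in the induction for (ii): the paper first proves $u(n)\equiv 0$ on the window $n_0\leq n\leq n_0+n_1-2$ (where $n_1=\frac{3(p+1)}{4}$) by showing that all binomial coefficients in the truncated recursion \eqref{eq:u-truncated} vanish there, and then propagates forward using the fact that $u(n)$ mod $p$ depends only on the previous $n_1-1$ values of $u$. Your single strong induction avoids this two-stage structure through the observation that for every $m<n_0$ and every $n\geq n_0$, either the binomial vanishes or the parity upgrade $2(n-m)\geq p^2+1$ gives $n-m\geq n_0+1\geq\frac{3(p+1)}{4}$, so the factor $(1\cdot5\cdots(4(n-m)-3))^2$ dies; this covers all terms uniformly, handles the base case $n=n_0$ for free, and is, if anything, cleaner than the paper's argument. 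For (iii) the two proofs are essentially the same; your symmetrized identity $\sum_{m=0}^n\binom{2n}{2m}v(m)v(n-m)=2^n(1\cdot5\cdots(4n-3))^2$ is a tidy way to keep the $m=0$ and $m=n$ endpoint terms (which contribute $2v(n)$) explicit, and your separate worry about $m=n_0$ is already resolved by your general digit estimate, since that estimate places $n-n_0$ strictly between $n_0$ and $n$, where the induction hypothesis applies.
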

We first prove a lemma that will be used repeatedly, then we prove the theorem in three parts. 
\begin{lemma}
\label{lem:tool}
If $a,b\in\mathbb{Z}$ and $p^2\leq a\leq b+p^2-1\leq 2p^2-2$, then ${a\choose b}\equiv 0$. 
\end{lemma}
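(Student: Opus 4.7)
The plan is to deduce $\omega_p\!\binom{a}{b}\geq 1$ from Legendre's formula (Theorem~\ref{legendre}), which the paper has already put to work for $p=5$ in Lemma~\ref{oneterm}. First I would unpack the chain $p^2\leq a\leq b+p^2-1\leq 2p^2-2$ into three digestible inequalities: $a\geq p^2$, $b\leq p^2-1$, and $a-b\leq p^2-1$. The consequence to keep in mind is that $b$ and $a-b$ each have at most two base-$p$ digits, while $a$ has three, with leading digit exactly $1$ (since $p^2\leq a<2p^2$).

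Next, Legendre's formula yields
\[
\omega_p\!\binom{a}{b} \;=\; \frac{s_p(b)+s_p(a-b)-s_p(a)}{p-1},
\]
so it suffices to verify that $s_p(b)+s_p(a-b)>s_p(a)$. This is the well-known content of Kummer's theorem: the right-hand numerator counts $(p-1)$ times the number of carries produced when the base-$p$ expansions of $b$ and $a-b$ are added to form $a$, so what I really need is at least one carry in that addition.

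The carry is forced by a digit mismatch in position two. Since $b<p^2$ and $a-b<p^2$, both numbers carry a zero in the $p^2$-digit; yet $a\geq p^2$ while $a<2p^2$ pins the $p^2$-digit of $a$ to exactly $1$. The only way an addition of two numbers whose $p^2$-digits vanish can produce a $1$ two places up is through a carry out of position one, giving $\omega_p\!\binom{a}{b}\geq 1$ and therefore $\binom{a}{b}\equiv 0$.

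I do not foresee a real obstacle — the lemma is essentially a short base-$p$ digit-count, and the main point of care is simply that the ceiling $a\leq 2p^2-2$ prevents a further carry out of the $p^2$-digit, keeping the leading digit of $a$ equal to $1$. That is immediate from the hypothesis, so the argument collapses to the single observation about position-two digits described above.
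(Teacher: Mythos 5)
Your argument is correct, but it runs on slightly different machinery than the paper's. You use the digit-sum form of Legendre's formula (the form actually stated as Theorem~\ref{legendre}) to write $\omega_p\binom{a}{b}=\frac{s_p(b)+s_p(a-b)-s_p(a)}{p-1}$ and then invoke Kummer's carry-counting interpretation: since $b\leq p^2-1$ and $a-b\leq p^2-1$ both have a zero $p^2$-digit while $p^2\leq a<2p^2$ forces the $p^2$-digit of $a$ to be $1$, there must be a carry into position two, giving a strictly positive valuation. (If you wanted to avoid citing Kummer, the paper's own Lemma~\ref{facts}(i) --- equality in $s_p(r+s)\leq s_p(r)+s_p(s)$ iff no carries --- yields the same conclusion.) The paper instead works with the floor-sum form of Legendre's formula: from $b,a-b<p^2$ it gets $\omega_p(b!\,(a-b)!)=\lfloor b/p\rfloor+\lfloor (a-b)/p\rfloor\leq a/p$, while $a\geq p^2$ gives $\omega_p(a!)\geq\lfloor a/p\rfloor+1>a/p$, and subtracts. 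The two proofs are of comparable length and both are elementary; yours has the small aesthetic advantage of using the version of Legendre's theorem the paper actually records, and it makes the combinatorial reason for the vanishing (a forced carry at the $p^2$-place) more transparent, while the paper's avoids any mention of carries. One housekeeping point either way: the hypotheses do guarantee $1\leq b<a$, so $\binom{a}{b}$ is a genuine binomial coefficient; this is implicit in both arguments and worth a half-sentence.
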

\begin{proof}
The hypothesis implies that $b\leq p^2-1$ and $a-b\leq p^2-1$.
It follows that 
\[\omega_p(b!(a-b)!)=\left\lfloor \frac{b}{p} \right\rfloor +\left\lfloor \frac{a-b}{p}\right\rfloor \leq \frac{a}{p}.\]
Meanwhile, since $a \geq p^2$
\[
\omega_p(a!)\geq \left\lfloor \frac{a}{p} \right\rfloor +1>\frac{a}{p},
\]
and therefore $\omega_p\left({a\choose b}\right) >0.$
\end{proof}

\begin{proof}[Proof of Theorem \ref{thm:uv} (i)]
By \eqref{eq:u_recursion}, we have
\begin{align}
\label{eq:vanishing}
u\left(\frac{p-1}{2}\right)&=(3\cdot 7\cdots (2p-3))^2 \notag\\
&-\sum_{m=0}^{\frac{p-1}{2}-1}{p\choose 2m+1}\left[1\cdot 5\cdots \left(4\left(\frac{p-1}{2}-m\right)-3\right)\right]^2u(m).
\end{align}
The product $(3\cdot 7\cdots (2p-3))^2$ contains as factors all positive integers that are congruent to $3$ mod $4$ and less than $2p+1$, and $p$ is such a number. Furthermore ${p\choose m}\equiv 0$ for $1\leq m <p$, so the sum in \eqref{eq:vanishing} also vanishes mod $p$. 
\end{proof}

\begin{proof}[Proof of Theorem \ref{thm:uv} (ii)]
Set $n_1=\frac{3(p+1)}{4}<n_0.$ 
Referring to \eqref{eq:u_recursion},
observe that $3\cdot 7\cdots (4n-1))^2\equiv 0$ for $n\geq\frac{p+1}{4}$, so in particular for $n\geq n_0$.
Observe also that 
$
0\equiv 1\cdot 5\cdots (4(n-m)-3)
$
if $n-m\geq n_1$.
It follows that if $n\geq n_0$, we have the following truncated summation for $u(n)$: 
\begin{equation}
\label{eq:u-truncated}
u(n)\equiv \sum_{m=n-n_1+1}^{n-1}{2n+1 \choose 2m+1} (1\cdot 5\cdots (4(n-m)-3))^2u(m).
\end{equation}

We will also use the fact that
\begin{equation}
\label{eq:tool}
{2n+1\choose 2m+1}\equiv 0 
\end{equation}
$
\text{for } \,\,n_0\leq n\leq n_0+n_1 -2 \,\,\text{ and } \,\,n_0-n_1+1 \leq m \leq n_0-1,
$
which follows from Lemma \ref{lem:tool}. Indeed, the assumptions on $n$ in \eqref{eq:tool} imply that 
\[
p^2=2n_0+1\leq 2n+1\leq 2n_0+2n_1-3\leq 2p^2-2, 
\]
and hence
\[
p^2+\frac{3}{2}(p+1)+2= 2n_0-2n_1+3\leq 2m+1\leq 2n_0-1=p^2-2.
\]
But $p^2+\frac{3}{2}(p+1)+2\geq (2n+1)-p^2+1$, for $n\leq n_0+n_1-2.$
In brief, Lemma \ref{lem:tool} applies with $a=2n+1$ and $b=2m+1$, verifying \eqref{eq:tool}.

It follows that $u(n_0)\equiv 0$, since \eqref{eq:tool} implies that all of the binomial coefficients in \eqref{eq:u-truncated} vanish mod $p$ when $n=n_0$. Now suppose that \[u(n_0)\equiv u(n_0+1)\equiv \cdots \equiv u(n_0+k-1)\equiv 0,\] for some $k$ such that $1\leq k\leq n_1-2$. This supposition, along with \eqref{eq:u-truncated}, implies that
\begin{align*}
u(n_0+k)&\equiv\sum_{m=n_0+k-n_1+1}^{n_0+k-1}\bigg{[}{2(n_0+k)+1 \choose 2m+1}\\
&\quad\quad\quad\quad\quad\quad\quad\times(1\cdot 5\cdots (4(n_0+k-m)-3))^2u(m)\bigg{]}\\
&\equiv\sum_{m=n_0+k-n_1+1}^{n_0-1} \bigg{[}{2(n_0+k)+1 \choose 2m+1}\\
&\quad\quad\quad\quad\quad\quad\quad\times(1\cdot 5\cdots (4(n_0+k-m)-3))^2u(m)\bigg{].}
\end{align*}
By \eqref{eq:tool} all the binomial coefficients in the sum vanish; hence $u(n_0+k)\equiv 0$. Since $k$ was arbitrary,  we can conclude that $u(n)\equiv 0$ when $n_0\leq n\leq n_0+n_1-2$.

Finally, by \eqref{eq:u-truncated} $u(n)_p$ is a sum involving only those values of $u$ evaluated at integers in $[n-n_1+1, n-1]$; if these values of $u$ vanish mod $p$, then so does $u(n)$. Therefore, if one can show that $u(n)$ vanishes mod $p$ for $n_1-1$ consecutive values of $n$, then by induction $u(n)$ must vanish mod $p$ for all larger $n$.
But we've already shown above that $u(n)$ vanishes for $n\in[n_0,n_0+n_1-2],$ so it follows that $u(n)$ vanishes for all $n\geq n_0$.\end{proof}

\begin{proof}[Proof of Theorem \ref{thm:uv} (iii)]
Referring to the recursive definition for $v(n)$ in \eqref{eq:v_recursion}, observe that $1\cdot 5\cdots (4n-3)\equiv 0$ for $n\geq \frac{3p+3}{4}$ and hence for $n\geq n_0$.
Furthermore, if $n=n_0+1$ and $1\leq m\leq n_0$, then Lemma \ref{lem:tool} applies with $a=2n$ and $b=2m$ and hence ${2n\choose 2m}\equiv 0$. Therefore, $v(n_0+1)\equiv 0$.

Now let $n>n_0$ be arbitrary, and assume as an induction hypothesis that $v(k)\equiv 0$ for all $n_0< k< n$.
Then
\begin{equation}
\label{eq:v-truncated}
v(n)\equiv -\frac{1}{2}\sum_{m=1}^{n-1} {2n\choose 2m} v(m)v(n-m).
\end{equation}
If $n\geq 2n_0$, then $n-m>n_0$ for all values of the summation index $m$, so by the induction hypothesis $v(n-m)$ vanishes mod $p$ and so does the sum. So assume that $n\leq 2n_0$. Then we may restrict the sum in \eqref{eq:v-truncated} to index values $m\in[n-n_0,n_0]$, since for other values of $m$ either $m>n_0$ or $n-m>n_0$.
However, for any such $m$, we can apply Lemma \ref{lem:tool} with $a=2n$ and $b=2m$, since $p^2\leq 2n$
and $2n-p^2+1\leq 2m\leq 2n_0=p^2-1$. It follows that every binomial coefficient in $\eqref{eq:v-truncated}$ vanishes mod $p$ and so does $v(n)$. Induction on $n>n_0$ completes the proof. 
\end{proof}
\subsection{Proof of Theorem \ref{main_theorem} (iii)}
We begin with a lemma that provides a means for proving the theorem.
\begin{lemma}\label{lem:sufficient}
If $r(n,k)\equiv 0$ for all pairs $(n,k)$ such that $1\leq k\leq n_0<n$, then Theorem \ref{main_theorem} (iii) is true. 
\end{lemma}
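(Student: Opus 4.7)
The plan is a short strong induction on $n > n_0$ using the recurrence \eqref{eq:recurrence_d}, with the hypothesis of the lemma handling the ``early'' part of the sum and the induction hypothesis handling the ``late'' part, while Theorem \ref{thm:uv} (iii) kills the $v(n)$ term.

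More explicitly, fix $n > n_0$ and apply \eqref{eq:recurrence_d}:
\[
d(n) = v(n) - \sum_{k=1}^{n-1} r(n,k)\, d(k).
\]
By Theorem \ref{thm:uv} (iii) we have $v(n)\equiv 0 \pmod p$, so it suffices to show the sum vanishes mod $p$. Split it as
\[
\sum_{k=1}^{n-1} r(n,k)\, d(k) \;=\; \sum_{k=1}^{\min(n_0,\,n-1)} r(n,k)\, d(k) \;+\; \sum_{k=n_0+1}^{n-1} r(n,k)\, d(k),
\]
where the second sum is empty when $n = n_0+1$. The first sum is a sum over pairs $(n,k)$ with $1\leq k\leq n_0 < n$, so by the hypothesis of the lemma every $r(n,k)$ in it is $\equiv 0 \pmod p$, and the sum vanishes mod $p$.

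For the second sum, I would proceed by strong induction on $n > n_0$. The base case $n = n_0+1$ requires nothing further, since that sum is empty and we already showed $d(n_0+1)\equiv 0 \pmod p$. For the inductive step with $n \geq n_0+2$, assume $d(k)\equiv 0 \pmod p$ for all $n_0 < k < n$; then every summand in the second sum has $d(k)\equiv 0 \pmod p$, so the whole second sum vanishes. Combining, $d(n)\equiv 0 \pmod p$, completing the induction and hence the lemma.

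There is no real obstacle in this argument: the lemma is a clean bookkeeping reduction that isolates the genuinely hard task of the section, namely establishing the vanishing $r(n,k)\equiv 0 \pmod p$ for $1\leq k\leq n_0 < n$, which will be carried out separately using Corollary \ref{cor:partitions_def} together with Theorem \ref{thm:uv} (i)–(ii) on the vanishing of $u$.
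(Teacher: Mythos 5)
Your proof is correct and follows essentially the same route as the paper: reduce via the recurrence \eqref{eq:recurrence_d}, kill $v(n)$ with Theorem \ref{thm:uv} (iii), use the hypothesis to eliminate the terms with $k\leq n_0$, and induct on $n>n_0$ to eliminate the rest. No issues.
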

Figure 2 below gives an illustration of the lemma's hypothesis in the case $p=7$. 

\begin{figure*}[h]
\center{\includegraphics[scale=0.65]{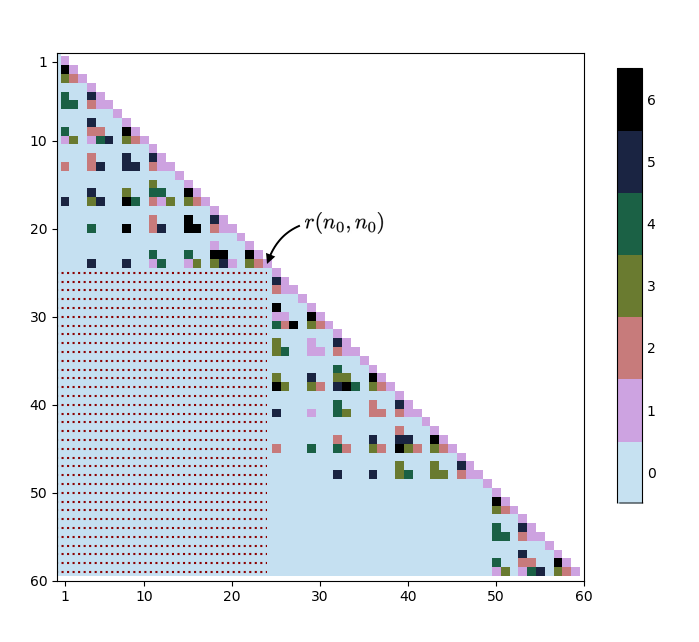}}
\caption{Congruences of $r(n,k)$ mod 7, $1\leq k\leq n< 60$. The rows are indexed by $n$, and the colors indicate residue classes of $r(n,k)$ mod 7, according to the colorbar. Note that $n_0=\frac{p^2-1}{2}=24$ for $p=7$. The submatrix $(r(n,k))_{1\leq k\leq n_0<n}$, where $r(n,k)$ vanishes by Theorem \ref{thm:valuation}, is emphasized.}
\end{figure*}

\begin{proof}
Equation \eqref{eq:recurrence_d} and Theorem \ref{thm:uv} imply that if $n>n_0$ then
\begin{equation*}
d(n)\equiv -\sum_{k=1}^{n-1} r(n,k)d(k).
\end{equation*}
If we assume the hypothesis of the lemma, then 
\begin{equation*}
d(n_0+1)\equiv -\sum_{k=1}^{n_0} r(n_0+1,k)d(k)\equiv 0, 
\end{equation*}
since all the summands vanish mod $p$; moreover for general $n>n_0$,
\begin{equation*}
d(n+1)\equiv -\sum_{k=n_0+1}^{n} r(n,k)d(k).
\end{equation*}
Therefore, if we assume that $d(k)\equiv 0$ for all $k$ such that $n_0+1\leq k\leq n$, then $d(n+1)\equiv 0$. It follows by induction that $d(n)\equiv 0$ for all $n>n_0$, which is the statement of Theorem \ref{main_theorem} (iii).
\end{proof}
As we did in Section \ref{section:p=5}, for $p=5$, we would now like to restrict the class of partitions that we need to consider in  
\eqref{eq:partitions_def_p}, for $p\equiv 3$ (mod $4$). Let $\mathcal{P}^*_{2n,2k}\subset\mathcal{P}'_{2n,2k}$ denote the set of partitions $\lambda$ whose parts are all less than $p^2$, and such that no part of $\lambda$ is equal to $p$, i.e. $c_p=0$. 
Theorem \ref{thm:uv} implies that we may replace the index set in the summation \eqref{eq:partitions_def_p} with $\mathcal{P}^*_{2n,2k}$, since any partition $\lambda\in \mathcal{P}'_{2n,2k}\setminus{\mathcal{P}^*_{2n,2k}}$ must contain a part $\lambda_i$ such that $u\left(\frac{\lambda_i-1}{2}\right)_p=0$
and hence will contribute $0$ to the sum. 
In other words, 
\begin{equation}
\label{eq:partitions_def_p_2}
s(n,k)_p=\sum_{\lambda\in \mathcal{P}^*_{2n,2k}}\left( \left[\frac{(2n)!}{\prod_{i=1}^{2n} i!^{c_i}c_i!}\right]_p \prod_{i=1}^{2n}\left[u\left(\frac{i-1}{2}\right)^{c_i}\right]_p\right),
\end{equation}
with the convention that $s(n,k)_p=0$ if  $\mathcal{P}^{*}_{2n,2k}= \emptyset$.
The key to using formula $\eqref{eq:partitions_def_p_2}$ is the following theorem. 
\begin{theorem}\label{thm:valuation}
Let $(n,k)$ be such that $1\leq k\leq n_0<n.$ Let $\lambda=(c_1,c_2,\dots, c_{2n})$ be a partition in $\mathcal{P}^*_{2n,2k}.$ Then,
\begin{equation}
\omega_p\left(\frac{(2n)!}{\prod_{i=1}^{2n}i!^{c_i}c_i!}\right)>0.
\end{equation}
\end{theorem}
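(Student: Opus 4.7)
The plan is to apply Legendre's formula (Theorem~\ref{legendre}) to each factorial in the expression. Since every part $i$ of $\lambda \in \mathcal{P}^*_{2n,2k}$ lies in $[1,p^2)$, we have $\omega_p(i!) = \lfloor i/p \rfloor$, and since each $c_i \le 2k \le 2n_0 = p^2-1$, we likewise have $\omega_p(c_i!) = \lfloor c_i/p \rfloor$. Setting $A := \sum_i c_i\lfloor i/p\rfloor$, $B := \sum_i c_i\,(i \bmod p)$, and $S := \sum_i \lfloor c_i/p\rfloor$, the identity $\sum_i ic_i = 2n$ becomes $pA + B = 2n$, and I obtain
\[
\omega_p\!\left(\frac{(2n)!}{\prod_i i!^{c_i}c_i!}\right) = \omega_p((2n)!) - A - S.
\]
So the task reduces to proving $\omega_p((2n)!) > A + S$.

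Next, the relation $pA + B = 2n$ gives $\lfloor 2n/p\rfloor = A + \lfloor B/p\rfloor$, hence
\[
\omega_p((2n)!) - A = \lfloor B/p\rfloor + \lfloor 2n/p^2\rfloor + \lfloor 2n/p^3\rfloor + \cdots,
\]
so it is enough to show $\lfloor B/p\rfloor + \lfloor 2n/p^2\rfloor > S$. I would split $S = S_+ + S_0$, where $S_0 := \sum_{p\mid i}\lfloor c_i/p\rfloor$ and $S_+ := S - S_0$, and then prove $\lfloor B/p\rfloor \ge S_+$ and $\lfloor 2n/p^2\rfloor > S_0$ separately. The first follows from the fact that for each part contributing to $S_+$ the residue $i \bmod p \ge 1$, so $c_i(i\bmod p) \ge c_i \ge p\lfloor c_i/p\rfloor$; summing yields $B \ge pS_+$. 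For the second, observe that a part $i$ contributing to $S_0$ must be an odd multiple of $p$ different from $p$ itself (the latter because $c_p = 0$ for $\lambda \in \mathcal{P}^*$, as dictated by Theorem~\ref{thm:uv}(i)); writing $i = ap$, oddness of both $i$ and $p$ forces $a$ odd, while $i \ne p$ forces $a \ne 1$, so $a \ge 3$. Hence $c_i \cdot i \ge (p\lfloor c_i/p\rfloor)(3p) = 3p^2\lfloor c_i/p\rfloor$, which summed over all such parts gives $2n \ge 3p^2 S_0$, so $\lfloor 2n/p^2\rfloor \ge 3S_0$. If $S_0 \ge 1$ this already yields $\lfloor 2n/p^2\rfloor > S_0$; if $S_0 = 0$, the hypothesis $n > n_0$ gives $2n \ge p^2+1$ and hence $\lfloor 2n/p^2\rfloor \ge 1 > S_0$. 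Either way $\lfloor B/p\rfloor + \lfloor 2n/p^2\rfloor \ge S_+ + (S_0 + 1) > S$.

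The main obstacle is the $S_0$ step: one has to recognize that the exclusion of the part $i = p$ in the definition of $\mathcal{P}^*$ is exactly what promotes the auxiliary multiplier from $a \ge 1$ to $a \ge 3$, supplying the factor of $3$ needed to strictly dominate $S_0$. Without this exclusion, a single part equal to $p$ with $c_p \ge p$ would already destroy the bound. The hypothesis $n > n_0$ enters only in the corner case $S_0 = 0$, providing the one guaranteed carry at the $p^2$-digit that $A$ alone cannot account for.
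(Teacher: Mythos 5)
Your argument is correct. I checked each step: the reductions $\omega_p(i!)=\lfloor i/p\rfloor$ and $\omega_p(c_i!)=\lfloor c_i/p\rfloor$ are valid because all parts are $<p^2$ and $c_i\leq 2k\leq p^2-1$; the identity $2n=pA+B$ and hence $\lfloor 2n/p\rfloor=A+\lfloor B/p\rfloor$ is right; $B\geq pS_+$ holds since every part contributing to $B$ has nonzero residue mod $p$; and the exclusion $c_p=0$ together with oddness of the parts does force every part divisible by $p$ to be at least $3p$, giving $2n\geq 3p^2S_0$ and the strict inequality $\lfloor 2n/p^2\rfloor\geq S_0+1$ in both cases $S_0\geq 1$ and $S_0=0$ (the latter using $2n\geq p^2+1$). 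This is a genuinely different execution from the paper's. The paper also starts from Legendre's formula, but in the digit-sum form $\omega_p(m!)=(m-s_p(m))/(p-1)$: it expands each $i$ and $c_i$ in base $p$ as two-digit numbers, establishes non-negativity of the valuation through a cascade of subadditivity inequalities for $s_p$, and then argues by contradiction that if the valuation were zero all those inequalities would be equalities, forcing (among other things) $c_i<p$ for $i\geq 2$ and ultimately $2n\leq p^2-1$, contradicting $n>n_0$. Your direct carry-counting version is shorter and makes the role of each hypothesis more transparent --- in particular it isolates exactly where $c_p=0$ is needed (to promote the multiplier on $S_0$ from $1$ to $3$; your counterexample-in-spirit with $c_p\geq p$ is exactly the configuration that achieves valuation zero if the exclusion is dropped) and where $n>n_0$ is needed (the single guaranteed carry at the $p^2$ digit when $S_0=0$). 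The paper's version, by tracking which subadditivity inequalities are tight, extracts more structural information about near-extremal partitions, but for the stated theorem your route suffices and is cleaner.
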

The theorem implies,
by \eqref{eq:partitions_def_p_2},
that $r(n,k)=2^{n-k}s(n,k)$ vanishes mod $p$ for $1\leq k\leq n_0<n$, and in view of Lemma \ref{lem:sufficient} will complete the proof of Theorem \ref{main_theorem} (iii). 

We record as a lemma a few facts about arithmetic that will be used freely in the proof of Theorem \ref{thm:valuation}.
\begin{lemma}\label{facts}
Let $r$ and $s$ be positive integers with base-$p$ expansions \[r=\sum_{i=1}^\infty r_ip^i\,\,\, \text{ and }\,\,\, s=\sum_{i=1}^\infty s_ip^i.\]
Then the following are true:
\begin{itemize}
\item[(i)]
$\displaystyle{s_p(r+s)\leq s_p(r)+s_p(s)}$, with equality iff there are no carries when $r$ is added to $s$ in base $p$, iff $r_i+s_i\leq p-1$ for all $i$.
\item[(ii)]
$\displaystyle{\sum_{i=1}^\infty(r_i+s_i)\geq s_p\left[\sum_{i=1}^\infty (r_i+ps_i)\right ]}$
\item[(iii)]
$s_p(rp)=s_p(r)$ always, and $s_p(r)=r$ iff $r\leq p-1$.
\end{itemize}
\end{lemma}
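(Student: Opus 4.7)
The plan is to prove all three parts using the standard schoolbook-addition view of base-$p$ arithmetic, taking carries as the basic combinatorial device. An alternative route invokes Kummer's theorem together with Legendre's formula (Theorem \ref{legendre}), but the direct column-by-column analysis is cleaner and essentially self-contained. Throughout I will read the paper's expansion as the usual base-$p$ expansion (reconciling the indexing convention, which appears to start at $i=1$, with the standard one starting at $i=0$ is only a harmless relabeling).

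For part (i), I would compute $r+s$ one column at a time in base $p$. At position $i$ the carry-out is $\lfloor(r_i+s_i+\varepsilon_i)/p\rfloor\in\{0,1\}$, where $\varepsilon_i$ is the carry-in from position $i-1$. Each carry removes the amount $p$ from one column and adds $1$ to the next, reducing the digit sum by exactly $p-1$. Letting $c$ denote the total number of carries performed during the addition, this gives the exact identity
\[
s_p(r+s)=s_p(r)+s_p(s)-(p-1)c,
\]
from which the inequality follows (since $c\geq 0$), with equality iff $c=0$. A short induction on $i$ confirms that $c=0$ iff $r_i+s_i\leq p-1$ for every $i$, which completes part (i).

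For part (ii), I would first observe that the right-hand side should be parsed as $s_p$ applied to the integer $\sum_{i}(r_i+ps_i)p^i$; splitting the sum yields $\sum_i r_i p^i+p\sum_i s_i p^i=r+ps$, so the right-hand side equals $s_p(r+ps)$. Applying part (i) and then part (iii) gives
\[
s_p(r+ps)\;\leq\;s_p(r)+s_p(ps)\;=\;s_p(r)+s_p(s)\;=\;\textstyle\sum_i(r_i+s_i),
\]
which is the claim. For part (iii), multiplication by $p$ simply shifts the base-$p$ expansion one position upward, so $s_p(rp)=s_p(r)$ with no computation. For the characterization $s_p(r)=r\iff r\leq p-1$: if $r\leq p-1$ then $r$ is a single base-$p$ digit and so equals its own digit sum, while if $r\geq p$ then at least one index $i\geq 1$ has $r_i\neq 0$, whence $r=\sum_i r_i p^i>\sum_i r_i=s_p(r)$ strictly (that term alone contributes $r_i p^i\geq p r_i>r_i$, and the remaining terms contribute at least as much to $r$ as to $s_p(r)$).

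There is no genuine obstacle here: the three assertions are elementary properties of base-$p$ digit sums. The only care needed is clerical, namely the indexing reconciliation noted above and being explicit in part (ii) about reading $\sum_i(r_i+ps_i)$ as specifying an integer via a (pre-carry) digit representation so that $s_p$ of it makes sense and equals $s_p(r+ps)$.
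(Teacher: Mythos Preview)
Your treatments of (i) and (iii) are fine and more explicit than the paper's, which simply declares them trivial. The problem is in (ii): you have misread the right-hand side. The expression $\sum_i(r_i+ps_i)$ carries no $p^i$ weights; it is the plain integer $\big(\sum_i r_i\big)+p\big(\sum_i s_i\big)=s_p(r)+p\,s_p(s)$, not $r+ps$. (Note that the left-hand side $\sum_i(r_i+s_i)$ is likewise a plain sum, equal to $s_p(r)+s_p(s)$, not $r+s$; the two sides are written in parallel.) The inequality you actually prove, $s_p(r)+s_p(s)\geq s_p(r+ps)$, is true but is not the stated claim, and it is not the form invoked later in the proof of Theorem~\ref{thm:valuation}, where (ii) is applied with summands $\alpha_i=b_0^i a_0^i$ and $\beta_i=b_1^i a_0^i$ to conclude $\sum_i(\alpha_i+\beta_i)\geq s_p\!\big(\sum_i(\alpha_i+p\beta_i)\big)$ with a plain integer sum inside $s_p$.

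The repair is immediate and uses exactly your tools. Set $A=\sum_i r_i$ and $B=\sum_i s_i$. Then $A\geq s_p(A)$ and $B\geq s_p(B)$ by the easy half of (iii), so
\[
A+B\ \geq\ s_p(A)+s_p(B)\ =\ s_p(A)+s_p(pB)\ \geq\ s_p(A+pB),
\]
using (iii) and then (i). This is precisely the paper's argument.
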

\begin{proof}
Statements (i) and (iii) are trivial. We use them to verify (ii).
\begin{align*}
\sum_{i=1}^\infty(r_i+s_i)&\geq s_p\left(\sum_{i=1}^\infty r_i\right)+s_p\left(\sum_{i=1}^\infty s_i\right)\\
&=s_p\left(\sum_{i=1}^\infty r_i\right)+s_p\left(p\sum_{i=1}^\infty s_i\right)\\
&\geq s_p\left(\sum_{i=1}^\infty r_i +p \sum_{i=1}^\infty s_i\right)\\
&=s_p\left[\sum_{i=1}^\infty (r_i+ps_i)\right ].
\end{align*}
\end{proof}

\begin{proof}[Proof of Theorem \ref{thm:valuation}]
Let $\lambda=(c_1,c_2,\dots, c_{2n})\in \mathcal{P}^*_{2n,2k}.$ For each $i$, $1\leq i \leq 2n$, let $c_i=a_0^i+a_1^i p$ be the base-$p$ expansion of $c_i$, and let $i=b_0^i+b_1^ip$ be the base-$p$ expansion of $i$. (The exponents are indices, and the fact that there are at most two digits in each expansion follows from the conditions $k\leq n_0$ and $\lambda_i< p^2$ for all $i$.)

By Theorem \ref{legendre},
\begin{align*}
\omega_p\left(\frac{(2n)!}{\prod_{i=1}^{2n} i!^{c_i}c_i!}\right)(p-1)&=2n-s_p(2n)-\sum_{i=1}^{2n} [c_i(i-s_p(i))+(c_i-s_p(c_i))\notag
]\\
&=\sum_{i=1}^{2n}s_p(i)c_i+\left[\sum_{i=1}^{2n}(s_p(c_i)-c_i)\right]-s_p(2n)\notag,
\end{align*}
where the last equality comes from the fact that $2n=\sum_{i=1}^{2n} ic_i.$
We expand all the terms in the last line base-$p$, obtaining
\begin{align*}
&\omega_p\left(\frac{(2n)!}{\prod_{i=1}^{2n} i!^{c_i}c_i!}\right)(p-1)\notag\\
&=\sum_{i=1}^{2n}(b_0^i+b_1^i)(a_0^i+a_1^ip)+\sum_{i=1}^{2n}a_1^i(1-p)-s_p\left(\sum_{i=1}^{2n}(b_0^i+b_1^ip)(a_0^i+a_1^ip)\right)\notag\\
\end{align*}
\begin{align}
&=\sum_{i=1}^{2n}(b_0^i+b_1^i)a_1^ip+\sum_{i=1}^{2n}(b_0^i+b_1^i)a_0^i+\sum_{i=1}^{2n}a_1^i(1-p)\notag\\
&\quad\quad\quad\quad\quad\quad\quad\,\,\,\,\,- s_p\left(\sum_{i=1}^{2n}(b_0^i+b_1^ip)a_0^i+\sum_{i=1}^{2n}(b_0^i+b_1^ip)a_1^ip\right)\label{eq:0}\\
&\geq \sum_{i=1}^{2n}(b_0^i+b_1^i)a_1^ip - s_p\left(\sum_{i=1}^{2n}(b_0^i+b_1^ip)a_1^ip\right)+\sum_{i=1}^{2n} a_1^i(1-p)\label{eq:1}\\
&\quad\quad\quad\quad\quad\quad\quad\,\,\,\,\,+\sum_{i=1}^{2n}(b_0^i+b_1^i)a_0^i-s_p\left(\sum_{i=1}^{2n}(b_0^i+b_1^ip)a_0^i\right)\label{eq:2}\\
&\geq 0,\notag
\end{align}
where we justify the last inequality as follows. The quantity in \eqref{eq:2} is non-negative by Lemma \ref{facts} (ii), and we claim that the quantity in \eqref{eq:1} is also non-negative. To show that, write the quantity as
\[
\left\{\left[\sum_{i=1}^{2n} (b_0^i+b_1^i)a_1^i\right] -s_p\left(\sum_{i=1}^{2n} (b_0^i+b_1^ip)a_1^ip\right)\right\}\]
\[+
\left\{\left[\sum_{i=1}^{2n} (b_0^i+b_1^i)a_1^i(p-1)\right] +\left[\sum_{i=1}^{2n}a_1^i(1-p)\right]\right\}.
\]
The first bracketed term is non-negative by Lemma \ref{facts}, and the second bracketed term is non-negative since $b_0^i+b_1^i\geq 1$ for all $i$.

Suppose now that $\omega_p\left(\frac{(2n)!}{\prod_{i=1}^{2n} i!^{c_i}c_i!}\right)=0$. Then the inequality that we used to transition from \eqref{eq:0} to \eqref{eq:1} and \eqref{eq:2} must be an equality, and the quantity in \eqref{eq:2} and the bracketed quantities above must all vanish. These facts have a number of consequences. First, from the second bracketed quantity, observe that for all $i$, 
\begin{equation*}
a_1^i(b_0^i+b_1^i)=a_1^i,
\end{equation*}
so either $a_1^i=0$ or $(b_0,b_1)=(1,0)$ or $(b_0,b_1)=(0,1).$ Since no part of $\lambda$ is equal to $p$, by the definition of $\mathcal{P}^*_{2n,2k}$, the last option is not possible. We conclude that $a_1^i=0$ for all $i\geq 2$. In view of this, the identity
\begin{align*}
&s_p\left(\sum_{i=1}^{2n}(b_0^i+b_1^ip)a_0^i + \sum_{i=1}^{2n}(b_0^i+b_1^ip)a_1^ip \right)\\
&=s_p\left(\sum_{i=1}^{2n}(b_0^i+b_1^ip)a_0^i \right)+ s_p\left(\sum_{i=1}^{2n}(b_0^i+b_1^ip)a_1^ip \right)
\end{align*}
(from the transition from \eqref{eq:0} to \eqref{eq:1} and \eqref{eq:2}), can be simplified to
\begin{equation}
\label{eq:4}
s_p\left(\sum_{i=1}^{2n}(b_0^i+b_1^ip)a_0^i + a_1^1p\right)=s_p\left(\sum_{i=1}^{2n}(b_0^i+b_1^ip)a_0^i \right)+ a_1^1.
\end{equation}
Furthermore, the fact that the quantity in \eqref{eq:2} vanishes implies that \eqref{eq:4} can be written as
\begin{equation}
\label{eq:5}
s_p\left(\sum_{i=1}^{2n}(b_0^i+b_1^ip)a_0^i + a_1^1p\right)=
\left[\sum_{i=1}^{2n}(b_0^i+b_1^i)a_0^i\right] + a_1^1.
\end{equation}
Since we can estimate the left-hand-side by
\begin{align}\label{eq:6}
s_p\left(\sum_{i=1}^{2n}(b_0^i+b_1^ip)a_0^i + a_1^1p\right)&\leq s_p\left(\sum_{i=1}^{2n}b_0^ia_0^i\right)+s_p\left (p\left[\sum_{i=1}^{2n}b_1^ia_0^i\right]+a_1^1p\right)\notag\\
&=s_p\left(\sum_{i=1}^{2n}b_0^ia_0^i\right)+s_p\left(\left[\sum_{i=1}^{2n}b_1^ia_0^i\right]+a_1^1\right)\notag\\
&\leq \left[\sum_{i=1}^{2n}b_0^ia_0^i\right] + \left[\sum_{i=1}^{2n}b_1^ia_0^i\right] +a_1^1,\notag\\
\end{align}
which is the right-hand-side of $\eqref{eq:5},$ we must have equality throughout \eqref{eq:6}.
It follows that
\[
s_p\left(\sum_{i=1}^{2n}b_0^ia_0^i\right)+s_p\left(\left[\sum_{i=1}^{2n}b_1^ia_0^i\right]+a_1^1\right)=\left[\sum_{i=1}^{2n}b_0^ia_0^i\right] + \left[\sum_{i=1}^{2n}b_1^ia_0^i\right] +a_1^1,
\]
and hence, by Lemma \ref{facts} (iii), we see that
\begin{equation}
\label{eq:doo}
\left[\sum_{i=1}^{2n}b_0^ia_0^i\right]\leq p-1 \,\,\text{ and }\,\,
\left[\sum_{i=1}^{2n}b_1^ia_0^i\right] +a_1^1 \leq p-1.
\end{equation}
Recalling that $2n=\sum_{i=1}^{2n} ic_i$, we see from \eqref{eq:doo} that 
\begin{align*}
2n&=a_1^1p+\sum_{i=1}^{2n}(b_0^i+b_1^ip)a_0^i\\
&=\sum_{i=1}^{2n}b_0^ia_0^i+p\left[\left(\sum_{i=1}^{2n} b_1^ia_0^i\right)+a_1^1\right]\\
&\leq (p-1)+p(p-1)\\
&=p^2-1.
\end{align*}
In conclusion, the supposition above that $\omega_p\left(\frac{(2n)!}{\prod_{i=1}^{2n} i!^{c_i}c_i!}\right)=0$ led us to the statement that $2n\leq p^2-1$. But the theorem assumed $n>n_0$, and hence $2n>p^2-1$. This contradiction completes the proof of Theorem \ref{thm:valuation}.
\end{proof}

\subsection{Closing remarks}
The inspiration for the proof above was supplied by Figure 2, from which the statement of Theorem \ref{thm:valuation} in the case $p=7$ seems obvious. 
It is now interesting and natural to ask why the results for $p=5$ and $p=4k+3$ are so different. Surely the difference should be reflected somewhere in the proofs of the respective statements. The most important difference is that $u(2)_5=u\left(\frac{5-1}{2}\right)_5=1$, whereas $u\left(\frac{p-1}{2}\right)_p=0$ for $p=4k+3$. Consequently, there exist partitions in $\mathcal{P}^3_{2n,2k}$ that index non-vanishing summands in \eqref{eq:partitions_def_p}, and indeed those partitions with the largest possible number of $5$'s are the important ones. On the other hand, when $p=4k+3,$ partitions in $\mathcal{P}'_{2n,2k}$ with parts equal to $p$ do not contribute to the sum in \eqref{eq:partitions_def_p} at all. It seems plausible, then, that a statement similar to Theorem \ref{thm:uv} that is general for powers $p^\alpha$ of primes $p=4k+3$ would exist and be useful in proving the experimentally evident conjecture from \cite{Romik} that $d(n)$ eventually vanishes mod $p^\alpha.$ 

\section*{Acknowledgments}
The author would like to especially thank Dan Romik for suggesting the topic of this paper, for sharing with me a version of Figure 1, and for many helpful consultations. The author would also like to thank an anonymous referee for suggesting improvements to an earlier version of this paper, and the author would like to thank Tanay Wakhare for suggesting an improvement to the earlier version's proof in Section \ref{section:odd}. This material is based upon work supported by the National Science Foundation under Grant No. DMS-1800725.


\end{document}